\newtheorem{theo}{Theorem}[section]
\newtheorem{lemma}[theo]{Lemma}
\newtheorem{cor}[theo]{Corollary}
\newtheorem{remark}[theo]{Remark}
\newtheorem{prop}[theo]{Proposition}
\newcommand{\io}{\int_0^1}
\newcommand{\R}{\mathbb{R}}
\newcommand{\N}{\mathbb{N}}
\newcommand{\dx}{\partial_x}
\newcommand{\dt}{\partial_t}
\newcommand{\abs}{\\[5pt]}
\newcommand{\tm}{T_{\mathrm{max}}}
\begin{document}
\title{Global existence in the 1D quasilinear parabolic-elliptic chemotaxis system with critical nonlinearity}
\author{
Tomasz Cie\'slak\footnote{cieslak@impan.pl}\\
{\small Institute of Mathematics,}\\
{\small Polish Academy of Sciences,}\\
{\small Warsaw, 00-656, Poland}
\and
Kentarou Fujie\footnote{fujie@rs.tus.ac.jp}\\
{\small Tokyo University of Science,}\\
{\small Tokyo, 162-0861, Japan}
}
\date{\small\today}
\maketitle
\begin{abstract}
The paper should be viewed as complement of an earlier result in \cite{CL_DCDS}. In the paper just mentioned it is shown that 1d case of a quasilinear parabolic-elliptic Keller-Segel system is very special. Namely, unlike in higher dimensions, there is no critical nonlinearity. Indeed, for the nonlinear diffusion of the form $1/u$ all the solutions, independently on the magnitude of initial mass, stay bounded. However, the argument presented in \cite{CL_DCDS} deals with the J\"{a}ger-Luckhaus type system. And is very sensitive to this restriction. Namely, the change of variables introduced in \cite{CL_DCDS}, being a main step of the method, works only for the J\"{a}ger-Luckhaus modification. It does not seem to be applicable in the usual version of the parabolic-elliptic Keller-Segel system. The present paper fulfils this gap and deals with the case of the usual parabolic-elliptic version. To handle it we establish a new Lyapunov-like functional (it is related to what was done in \cite{CL_DCDS}), which leads to global existence of the initial-boundary value problem for any initial mass.
\abs
 {\bf Key words:} chemotaxis; global existence; Lyapunov functional \\
 {\bf AMS Classification:} 35B45, 35K45, 35Q92, 92C17.
\end{abstract}
\newpage
\section{Introduction}\label{section_introduction}
We consider the following type of PDE system
\begin{align*}
	\begin{cases}
	u_t=\nabla \cdot \left( a(u) \nabla u-  u \nabla v \right)
	&\mathrm{in}\ (0,\infty)\times \Omega, \\[1mm]
	0=\Delta v -v+u
	&\mathrm{in}\ (0,\infty)\times \Omega, 
	\end{cases}
\end{align*}
with a given smooth function $a$ and $\Omega \subset \R^n$ ($n \in \N$).
The above system was introduced to describe an aggregation of cells (\cite{KS}). 
There are several known Lyapunov functionals associated to the above system (see for instance \cite{NSY}).
The information brought by Lyapunov functional is often a starting point in the studies of behavior of solutions  of Keller-Segel system, see for instance \cite{BBTW}.

\bigskip
{\bf Background}\quad
In the higher dimensional case ($n\geq 2$), it is known that the critical diffusion related to the
Keller-Segel system, meaning such that stronger diffusion yields global-in-time bounded solutions, while weaker
allows initial data (independently on the magnitude of mass) leading to finite-time blowup, is 
\begin{equation}\label{eq}
a(u) = u^{1-\frac{2}{n}}.
\end{equation}
Indeed, such a result in higher dimensions can be found in \cite{S_DIE} in the case of the domain being the whole $\R^n$. For bounded domains we refer to \cite{CW}. Moreover in the case of a diffusion behaving asymptotically like \eqref{eq}, the critical mass phenomenon holds. Namely there exists a 
threshold value of initial mass distinguishing between bounded and exploding solutions.
Again, we can refer the reader to \cite{S_ADE}, \cite{BCL} in the whole space case and to \cite{CL_CRAS} (finite-time blowup of radially symmetric solutions) and \cite{Nasri} (global existence for masses small enough) for bounded domains. For a deeper introduction to the topic we refer the reader to the Introduction of \cite{TCPhL4}. 
In the case of $a(u)=u^p$ or $a(u)=(1+u)^p$ the identified critical value is $p=1-\frac{2}{n}$. For this parameter one finds critical mass phenomenon in dimensions $n\geq 2$. 
Based on these results, $a(u)=1/u$ or $a(u)=(1+u)^{-1}$ is a candidate for 
the critical nonlinearity in one-dimensional setting. 
Indeed, in the case of simplified J\"{a}ger-Luckhaus system $a(u) = (1+u)^p$ 
with $p>-1$ implies global-in-time boundedness of solutions; 
whereas the case $p<-1$ allows solutions blowing up in finite time for initial data with arbitrary mass, see \cite{CW}. 
Surprisingly, in \cite{CL_DCDS} it is shown that for any initial mass solutions to the simplified J\"{a}ger-Luckhaus system with critical diffusion remain bounded. 
Let us mention here that such result is still lacking its counterpart in the fully parabolic case. So far it is only known that for small initial mass solutions exist globally and remain bounded, see \cite{BCM-R}. On the other hand we remark that also in the case of nonlocal diffusion in 1d there is no critical nonlinearity (see \cite{bg}). 
The aim of the present note is to extend the result of \cite{CL_DCDS} to the case of usual parabolic-elliptic 1d Keller-Segel system, meaning to show that the J\"{a}ger-Luckhaus modification is not essential.  

\bigskip
{\bf Motivation}\quad
Let us be more precise here and tell the reader what is the J\"{a}ger-Luckhaus modification exactly.
Its 1d version was studied both in \cite{CW, CL_DCDS}. 
\begin{align}\label{P_motivation}
	\begin{cases}
	\dt u =\dx \left( a(u) \dx u -  u \dx v \right)
	&\mathrm{in}\ (0,T)\times(0,1), \\[1mm]
	 0 =\dx^2 v -M +u, \;\;\int_0^1 vdx=0, 
	&\mathrm{in}\ (0,T)\times(0,1),  \\[1mm]
	 \dx A(u)(t,0)=\dx A(u)(t,1)=\dx v(t,0)=\dx v(t,1)=0
	&t\in(0,T),  \\[1mm]
	u(0,\cdot)=u_0\geq 0
	&\mathrm{in}\ (0,1),
	\end{cases}
\end{align} 
where $a\in C^1(0,\infty)$ is a positive function,  
the function $A$ is a primitive of $a$ and $M= \io u_0$. 
In \cite{CL_DCDS}, a change of variables in the the above system is introduced. It simplifies it to a single parabolic equation and it is proved that this parabolic equation has a Lyapunov functional. The information coming from the obtained Lyapunov functional is rich enough to establish global existence of solutions of \eqref{P_motivation}. 
However, the change of variables is very sensitive to the fact that the lower equation in \eqref{P_motivation}
has its exact form and cannot be pursuit for the usual Keller-Segel version. 

However, expressing the Lyapunov functional appearing in the reduced 1d problem in \cite{CL_DCDS} in the original 
variables we get the following Lyapunov functional associated to the full system \eqref{P_motivation}
\begin{eqnarray*}
\dfrac{d}{dt} \mathcal{F}_0(u(t)) + \mathcal{D}_0(u(t),v(t))
=0,
\end{eqnarray*}
where
\begin{eqnarray*}
\mathcal{F}_0(u(t)) 
&:=&\dfrac{1}{2} \io \dfrac{(a(u))^2}{u}|\dx u|^2 
+\io \left(-\int_1^u a(r)\,dr + M \int_1^u \frac{a(r)}{r}\,dr  \right)u, \\[2mm]
\mathcal{D}_0(u(t),v(t)) 
&:=& \io ua(u)\left| \dx \left(\dfrac{a(u)}{u}\dx u - \dx v \right) \right|^2.
\end{eqnarray*}
The above is different from the classical Lyapunov functional. 
We prove in the present note that a similar functional occurs also in the 
1d chemotaxis system without J\"{a}ger-Luckhaus modification. It carries enough information to be useful in the discussion of global existence. 
%
%
%

\bigskip
{\bf Main result}\quad
We consider the following problem
\begin{align}\label{P1}
	\begin{cases}
	\dt u =\dx \left( a(u) \dx u -  u \dx v \right)
	&\mathrm{in}\ (0,T)\times(0,1), \\[1mm]
	 0 =\dx^2 v -v+u
	&\mathrm{in}\ (0,T)\times(0,1),  \\[1mm]
	 \dx A(u)(t,0)=\dx A(u)(t,1)=\dx v(t,0)=\dx v(t,1)=0
	&t\in(0,T),  \\[1mm]
	u(0,\cdot)=u_0\geq 0
	&\mathrm{in}\ (0,1),
	\end{cases}
\end{align} 
where $a$ is a positive function, either $a\in C^1(0,\infty)\cap C[0,\infty)$ 
or singularity of $a$ at $0$ is admitted. 
The function $A$ is an indefinite integral of $a$ and 
$u_0 \in C^1[0,1]$ such that 
\begin{equation}\label{dodatnie}
u_0 \geq 0 \qquad \mbox{in } (0,1)
\end{equation}
if $a\in C[0,\infty)$ or
$u_0 \in C^1[0,1]$ such that 
\begin{equation}\label{nieujemne}
u_0 \geq m_0>0 \qquad \mbox{in } (0,1)
\end{equation}
with some $m_0>0$ if the singularity $a(0)$ is admitted.  

We construct the following functional satisfying
\begin{eqnarray*}
\dfrac{d}{dt} \mathcal{F}(u(t)) + \mathcal{D}(u(t),v(t))
=\io \dfrac{ua(u)v^2}{4},
\end{eqnarray*}
where
\begin{eqnarray*}
\mathcal{F}(u(t)) 
&:=&\frac{1}{2} \io \dfrac{(a(u))^2}{u}|\dx u|^2
- \io u\int_1^u a(r)\,dr ,\\[2mm]
\mathcal{D}(u(t),v(t)) 
&:=& \io ua(u) 
\left|  \dx \left(\dfrac{a(u)}{u}\dx u \right) - \dx^2 v  + \dfrac{v}{2} \right|^2.
\end{eqnarray*}
Our main result reads as follows.  
\begin{theo}\label{main_theorem}
Let $a(u)=1/u$ or $a(u)=\frac{1}{1+u}$. 
Then the problem \eqref{P1} has a unique classical positive solution, which exists globally in time. 
\end{theo}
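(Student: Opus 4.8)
The plan is to combine a standard local existence/extensibility theory with the functional identity stated above, the whole difficulty being concentrated in converting the \emph{upper} bound on $\mathcal{F}$ that the identity provides into an $L^\infty$ bound on $u$.

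\textbf{Step 1 (local theory, positivity, mass).} First I would invoke the standard quasilinear parabolic theory to produce, for initial data as in \eqref{dodatnie} or \eqref{nieujemne}, a unique classical solution $(u,v)$ on a maximal interval $[0,\tm)$, together with the extensibility criterion: if $\tm<\infty$ then $\limsup_{t\to\tm}\|u(t)\|_{L^\infty(0,1)}=\infty$, and in the case $a(u)=1/u$ one must additionally exclude $\liminf_{t\to\tm}\min_{[0,1]}u(t)=0$, the equation being uniformly parabolic only while $u$ stays bounded and bounded away from $0$. Positivity of $u$ follows from the maximum principle applied to the no-flux problem, and testing the first equation of \eqref{P1} by $1$ together with the boundary conditions gives conservation of mass $\io u(t)=\io u_0=:M$ for all $t\in[0,\tm)$. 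Thus it suffices to bound $u$ from above, and (for $a(u)=1/u$) from below away from $0$, on every finite time interval.

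\textbf{Step 2 (the right-hand side is harmless).} Since $a(u)=1/u$ gives $ua(u)=1$ and $a(u)=1/(1+u)$ gives $ua(u)=u/(1+u)\le 1$, the source term in the identity is controlled by $\tfrac14\io v^2$. Solving the second equation of \eqref{P1} with the Neumann Green's function $G$ of $-\dx^2+1$ on $(0,1)$, which is bounded, yields $\|v(t)\|_{L^\infty(0,1)}\le\|G\|_{L^\infty}\io u(t)=\|G\|_{L^\infty}M$. Hence the source is bounded by a constant $K=K(M)$ and the identity gives $\frac{d}{dt}\mathcal{F}(u(t))\le K$, so that
\[
\mathcal{F}(u(t))\le\mathcal{F}(u_0)+Kt\qquad\text{for all }t\in[0,\tm).
\]
In particular $\mathcal{F}(u(t))$ is bounded above on every finite interval $[0,T]$.

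\textbf{Step 3 (coercivity of $\mathcal{F}$: the crux).} The heart of the argument is a functional inequality showing that the upper bound on $\mathcal{F}$ forces an $L^\infty$ bound on $u$. For $a(u)=1/u$, writing $g:=u^{-1/2}$ one has $\tfrac12\io\frac{|\dx u|^2}{u^3}=2\io|\dx g|^2$ and $\io u\int_1^u a(r)\,dr=\io u\ln u$, so that
\[
\mathcal{F}(u)=2\io|\dx g|^2-\io u\ln u,\qquad \io g^{-2}=\io u=M.
\]
I would first prove an entropy--gradient inequality $\io u\ln u\le\io|\dx g|^2+C(M)$, with a constant strictly below the coefficient $2$; this is the quantitative form of the fact that a spike of $u$ costs $\|\dx g\|_{L^2}^2\sim\|u\|_{L^\infty}^{1/2}$ while contributing only $\sim\ln\|u\|_{L^\infty}$ to the entropy. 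Combined with Step 2 this bounds $\io|\dx g|^2$ on $[0,T]$. Then, from the pointwise estimate $g(x)\le\min g+\|\dx g\|_{L^2}|x-x_\ast|^{1/2}$ and the constraint $\io g^{-2}=M$, integrating $g^{-2}$ produces a lower bound $\min g\ge\gamma\big(\|\dx g\|_{L^2},M\big)>0$, equivalently $\|u\|_{L^\infty}=(\min g)^{-2}\le\overline{C}(T)$. This is the step I expect to be the main obstacle: isolating the correct substitution and the sharp form of the entropy--gradient inequality so that the factor $2$ in front of $\io|\dx g|^2$ genuinely beats the entropy. For $a(u)=1/(1+u)$ the same scheme applies with the primitive $g$ of $a(u)/\sqrt{2u}$ in place of $u^{-1/2}$; there a spike is penalised even more strongly, so the coercivity is at least as good.

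\textbf{Step 4 (lower bound and conclusion).} Finally, in the case $a(u)=1/u$ I would feed $\|u\|_{L^\infty}\le\overline{C}(T)$ back into $\io u\ln u\le M\ln^+\overline{C}(T)$; together with $\mathcal{F}(u(t))\le\mathcal{F}(u_0)+KT$ this gives $\io|\dx g|^2\le C(T)$, and since $\io g^{-2}=M$ forces $\min g\le M^{-1/2}$, the estimate $\|g\|_{L^\infty}\le M^{-1/2}+\|\dx g\|_{L^2}$ yields $\min_{[0,1]}u(t)=\|g(t)\|_{L^\infty}^{-2}\ge\underline{c}(T)>0$. For $a(u)=1/(1+u)$ the diffusion is non-degenerate at $u=0$, so no lower bound is needed. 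In both cases $u$ is bounded above, and bounded away from $0$, on every finite interval, so the extensibility criterion of Step 1 rules out $\tm<\infty$; hence the constructed solution is global in time.
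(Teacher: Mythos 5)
Your Steps 1--2 coincide with the paper's preliminaries: local existence with the $L^\infty$ extensibility criterion (Lemma \ref{prop_local_existence}), mass conservation, boundedness of $v$ in terms of the mass alone (the paper uses Brezis--Strauss, Lemma \ref{prop_ellipticReg}, instead of the Green's function, but this is immaterial), and the resulting bound $\mathcal{F}(u(t))\le\mathcal{F}(u_0)+Kt$ from Proposition \ref{prop_new_lyapunov}. After that you genuinely diverge from the paper, which never extracts $\|u\|_{L^\infty}$ directly from the functional: it proves $\io u|\log u|+\io \frac{(a(u))^2}{u}|\dx u|^2\le C(1+T)$ (Propositions \ref{propo}--\ref{prop_regest}), inserts this into the Biler--Hebisch--Nadzieja inequality (Lemma \ref{BHN_inequality}) after testing the equation with $u^2$ to get $\io u^3\le C(T)$, and then iterates to all $L^p$ and invokes parabolic regularity to reach $L^\infty$. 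Your Step 3(b)/Step 4 replaces all of that machinery by the elementary pointwise argument $g(x)\le \min g+\|\dx g\|_{L^2}|x-x_*|^{1/2}$ combined with $\io g^{-2}=M$: since $\io \bigl(m+L|x-x_*|^{1/2}\bigr)^{-2}dx$ diverges logarithmically as $m\to 0$, mass conservation caps $\|u\|_{L^\infty}=(\min g)^{-2}$ (roughly by $e^{CML^2}$), and the symmetric estimate at the maximum of $g$ gives the lower bound on $u$. For $a(u)=1/u$ this is correct and is a real simplification of the paper's second half.

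There are, however, two issues, the first of which is a genuine gap. (i) The crux you yourself flag --- the entropy--gradient inequality $\io u\ln u\le \epsilon\io|\dx g|^2+C(M)$ with $\epsilon<2$ --- is asserted with a heuristic, not proven, and it is precisely here that the paper's key idea lives (Proposition \ref{propo}): since $\io u=M$, there is $x_0$ with $u(x_0)=M$, and writing $\ln u(x)-\ln M=\int_{x_0}^x \frac{\dx u}{u^{3/2}}\,u^{1/2}\,dz$ and applying Cauchy--Schwarz gives $\io u\ln u\le M\ln M+M^{3/2}\bigl(\io\tfrac{|\dx u|^2}{u^3}\bigr)^{1/2}$, i.e.\ a \emph{square-root} (sublinear) bound in the gradient term, after which any linear coefficient follows from Young's inequality. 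This sublinearity cannot be bypassed cheaply: for instance, bootstrapping from your own Step 3(b) bound $\|u\|_{L^\infty}\lesssim L^{-2}e^{CML^2}$ yields $\io u\ln u\lesssim M^2 L^2+\dots$, whose coefficient $M^2$ exceeds $2$ for large mass, so without the $x_0$ trick (or an equivalent) your coercivity step does not close. (ii) Your justification for $a(u)=1/(1+u)$ --- that a spike is "penalised even more strongly, so the coercivity is at least as good" --- is backwards: there the substitution is $g=\sqrt2\arctan\sqrt u$, which is \emph{bounded}, so the gradient cost of a spike saturates, and the weight $\frac{1}{u(1+u)^2}$ is pointwise smaller than $\frac{1}{u^3}$; the penalisation is weaker, not stronger. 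Your scheme still closes, but only after redoing the computation: near the maximum one has $u\gtrsim(\eta+L|x-x_*|^{1/2})^{-2}$ with $\eta=\tfrac{\pi}{\sqrt2}-\max g$, and the mass integral again diverges logarithmically as $\eta\to0$, keeping $\max g$ away from $\tfrac{\pi}{\sqrt2}$. With (i) filled in by the paper's Proposition \ref{propo} argument and (ii) repaired as above, your proof is complete and indeed shorter than the paper's.
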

\begin{remark}
Comparing with the study of the problem \eqref{P_motivation}, 
though the formal difference between \eqref{P_motivation} and \eqref{P1} seems small, 
we had to find another way of dealing with the problem. We cannot apply the change of variables method, used in \cite{CL_DCDS}, to the problem \eqref{P1}. Also, the functional that we construct is similar,
but slightly different than the one in \cite{CL_DCDS}. Actually our new functional is not even a Lyapunov functional. 
But we still can control its growth in time. 
\end{remark}
\medskip

{\bf Plan of the paper.}\quad This paper is organized as follows. 
Section 2 is devoted to the key identity in this paper 
and a collection of useful facts. 
After constructing a new Lyapunov-like functional in Section 3, 
we derive regularity estimates and give a proof of Theorem \ref{main_theorem} (Section 4). 
%
%
%
%
%
%
%
\section{Preliminaries}\label{section_preliminaries}
The next lemma is the key ingredient in this paper. 
\begin{lemma}\label{Key}
Let $\phi \in C^3((0,1))$. 
Then the following identity holds:
\begin{align*}
\phi \dx \mathcal{M}(\phi)
=\dx \left( \phi a(\phi) \dx \left(\dfrac{a(\phi)}{\phi}\dx \phi \right) \right),
\end{align*}
where 
\begin{align*}
\mathcal{M}(\phi) 
:= \dfrac{a(\phi)a'(\phi)}{\phi} |\dx \phi|^2
-\dfrac{(a(\phi))^2}{2\phi^2} |\dx \phi|^2
+\dfrac{(a(\phi))^2}{\phi}\dx^2 \phi.
\end{align*}
\end{lemma}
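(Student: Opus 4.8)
The plan is to collapse the whole statement into a short bookkeeping computation by introducing one auxiliary quantity. Set $p := \dfrac{a(\phi)}{\phi}\dx\phi$, so that the inner factor on the right-hand side is exactly $\dx\!\left(\dfrac{a(\phi)}{\phi}\dx\phi\right)=\dx p$, and the claimed identity reads $\phi\,\dx\mathcal{M}(\phi)=\dx\!\left(\phi\, a(\phi)\,\dx p\right)$. The one relation I would flag at the outset, and use repeatedly, is
\[
\phi\, p = a(\phi)\,\dx\phi ,
\]
which lets me trade the factor $\phi$ for the remaining terms at the end.

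The first real step is to rewrite $\mathcal{M}(\phi)$ in terms of $p$. Expanding by the chain and product rules,
\[
\dx p = \left(\frac{a'(\phi)}{\phi}-\frac{a(\phi)}{\phi^2}\right)|\dx\phi|^2 + \frac{a(\phi)}{\phi}\,\dx^2\phi .
\]
Multiplying by $a(\phi)$ and adding $\tfrac12 p^2 = \dfrac{(a(\phi))^2}{2\phi^2}|\dx\phi|^2$ reproduces precisely the three terms in the definition of $\mathcal{M}$, so that
\[
\mathcal{M}(\phi) = a(\phi)\,\dx p + \tfrac12\, p^2 .
\]
This is the only place where the specific coefficients $\dfrac{a'(\phi)}{\phi}$, $\dfrac{(a(\phi))^2}{2\phi^2}$ and the factor $\tfrac12$ matter, and it is where I would be most careful.

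With $\mathcal{M}$ in this form the identity is elementary. Differentiating gives $\dx\mathcal{M}(\phi)=\dx\!\left(a(\phi)\,\dx p\right)+p\,\dx p$, so the left-hand side equals $\phi\,\dx\!\left(a(\phi)\,\dx p\right)+\phi\, p\,\dx p$. On the right-hand side the product rule yields $\dx\!\left(\phi\, a(\phi)\,\dx p\right)=\phi\,\dx\!\left(a(\phi)\,\dx p\right)+\dx(\phi)\,a(\phi)\,\dx p$, the two second-order contributions $\phi\, a(\phi)\,\dx^2 p$ cancelling against those hidden in $\phi\,\dx(a(\phi)\dx p)$. Hence the difference of the two sides reduces to $a(\phi)\,\dx\phi\,\dx p - \phi\, p\,\dx p$, which vanishes by the relation $\phi\, p=a(\phi)\,\dx\phi$ noted above. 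The hypothesis $\phi\in C^3$ is exactly what is needed to make $\dx p$ (hence $\dx\mathcal{M}$, involving $\dx^3\phi$) meaningful. I do not anticipate a genuine obstacle here: the entire content is the recognition of the compact form $\mathcal{M}=a(\phi)\dx p+\tfrac12 p^2$, after which everything is the product rule; the only risk is arithmetic slippage in the coefficients, which the substitution is designed to eliminate.
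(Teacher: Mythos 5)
Your proof is correct, and it takes a genuinely different route from the paper's. The paper proves the identity by brute force: it expands $\phi\,\dx\mathcal{M}(\phi)$ completely (producing nine terms involving $a$, $a'$, $a''$ and derivatives of $\phi$ up to third order), then regroups them as exact derivatives $\dx\bigl(a(\phi)a'(\phi)|\dx\phi|^2\bigr)$, $\dx\bigl((a(\phi))^2\dx^2\phi\bigr)$ and $-\dx\bigl(\tfrac{(a(\phi))^2}{\phi}|\dx\phi|^2\bigr)$, using the one-dimensional identity $\dx\phi\,\dx^2\phi=\tfrac12\dx\bigl(|\dx\phi|^2\bigr)$ to absorb the leftover terms. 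Your argument instead isolates the structural fact that, with $p:=\tfrac{a(\phi)}{\phi}\dx\phi$, one has
\begin{align*}
\mathcal{M}(\phi)=a(\phi)\,\dx p+\tfrac12\,p^2,
\end{align*}
after which the identity reduces to the product rule plus the relation $\phi\,p=a(\phi)\,\dx\phi$; I verified the coefficient bookkeeping in that decomposition ($a(\phi)\dx p$ contributes $-\tfrac{(a(\phi))^2}{\phi^2}|\dx\phi|^2$ and $\tfrac12 p^2$ restores half of it, giving exactly the $-\tfrac{(a(\phi))^2}{2\phi^2}|\dx\phi|^2$ term), and the final cancellation $\bigl(a(\phi)\dx\phi-\phi p\bigr)\dx p=0$ is sound. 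What your approach buys is brevity and an explanation of \emph{why} the identity holds: $\mathcal{M}$ is, up to the $\tfrac12 p^2$ correction, the derivative structure that makes $\phi\,\dx\mathcal{M}$ a perfect divergence; your use of $p\,\dx p=\tfrac12\dx(p^2)$ is the same intrinsically one-dimensional ingredient the paper flags in its remark, just in disguise. What the paper's expansion buys is that it never requires guessing the compact form, only patience. Both proofs share the same implicit hypotheses beyond the stated $\phi\in C^3$ (positivity of $\phi$ so that $a(\phi)/\phi$ makes sense, and $a\in C^2$, since $a''$ appears once two derivatives hit $a(\phi)$), so that is not a defect of your write-up relative to the paper's.
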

\begin{proof}
By a straightforward calculation we will show the identity.
The left-hand side of the identity is calculated as
\begin{align*}
&\, 
\phi \dx \mathcal{M}(\phi) \\
&=
\phi \left(
\dfrac{(a'(\phi))^2}{\phi} |\dx \phi|^2 \dx \phi
+
\dfrac{a(\phi)a''(\phi)}{\phi} |\dx \phi|^2 \dx \phi
-
\dfrac{a(\phi)a'(\phi)}{\phi^2} |\dx \phi|^2 \dx \phi
+
\dfrac{a(\phi)a'(\phi)}{\phi} \dx (|\dx \phi|^2)
  \right)\\
&\quad +\phi \left( 
-\dfrac{a(\phi)a'(\phi)}{\phi^2} |\dx \phi|^2 \dx \phi
+
\dfrac{(a(\phi))^2}{\phi^3} |\dx \phi|^2 \dx \phi
-\dfrac{(a(\phi))^2}{2\phi^2} \dx (|\dx \phi|^2)
\right)\\
&\quad +\phi \left(
\dfrac{2a(\phi)a'(\phi)}{\phi}  \dx^2 \phi \dx \phi
-\dfrac{(a(\phi))^2}{\phi^2} \dx^2 \phi \dx \phi
+\dfrac{(a(\phi))^2}{\phi} \dx^3 \phi
 \right)\\
 &=
(a'(\phi))^2 |\dx \phi|^2 \dx \phi
+
a(\phi)a''(\phi) |\dx \phi|^2 \dx \phi
-
\dfrac{2a(\phi)a'(\phi)}{\phi} |\dx \phi|^2 \dx \phi
+
a(\phi)a'(\phi) \dx (|\dx \phi|^2)\\
&\quad 
+
\dfrac{(a(\phi))^2}{\phi^2} |\dx \phi|^2 \dx \phi
-\dfrac{(a(\phi))^2}{2\phi} \dx (|\dx \phi|^2)\\
&\quad 
+2a(\phi)a'(\phi) \dx^2 \phi \dx \phi
-\dfrac{(a(\phi))^2}{\phi} \dx \phi \dx^2 \phi
+(a(\phi))^2 \dx^3 \phi.
\end{align*}
Since 
\begin{align*}
\dx \left(a(\phi)a'(\phi)|\dx \phi|^2 \right)
&=(a'(\phi))^2 |\dx \phi|^2 \dx \phi 
+a(\phi)a''(\phi) |\dx \phi|^2 \dx \phi
+a(\phi)a'(\phi) \dx (|\dx \phi|^2),\\
\dx \left((a(\phi))^2 \dx^2 \phi \right)
&=2a(\phi)a'(\phi)  \dx^2 \phi \dx \phi
+(a(\phi))^2 \dx^3 \phi,
\end{align*}
we compute that 
\begin{align*}
\phi \dx \mathcal{M}(\phi)
&=
\dx \left(a(\phi)a'(\phi)|\dx \phi|^2 \right)
+
\dx \left((a(\phi))^2 \dx^2 \phi \right)\\
&\quad 
-\dfrac{2a(\phi)a'(\phi)}{\phi} |\dx \phi|^2 \dx \phi
+
\dfrac{(a(\phi))^2}{\phi^2} |\dx \phi|^2 \dx \phi
-\dfrac{(a(\phi))^2}{2\phi} \dx (|\dx \phi|^2)
-\dfrac{(a(\phi))^2}{\phi} \dx^2 \phi \dx \phi.
\end{align*}
Due to the identity
\begin{align}\label{formula}
\dx \phi \dx^2 \phi = \dfrac{1}{2}\dx \left(|\dx \phi|^2 \right),
\end{align}
we arrive at
\begin{align*}
\phi \dx \mathcal{M}(\phi)
&=
\dx \left(a(\phi)a'(\phi)|\dx \phi|^2 \right)
+
\dx \left((a(\phi))^2 \dx^2 \phi \right)\\
&\quad -\dfrac{2a(\phi)a'(\phi)}{\phi} |\dx \phi|^2 \dx \phi
+
\dfrac{(a(\phi))^2}{\phi^2} |\dx \phi|^2 \dx \phi
-\dfrac{(a(\phi))^2}{\phi} \dx (|\dx \phi|^2)\\
&= \dx \left(
a(\phi)a'(\phi)|\dx \phi|^2 
+(a(\phi))^2 \dx^2 \phi
-\dfrac{(a(\phi))^2}{\phi} |\dx \phi|^2 \right)\\
&=
\dx \left( \phi a(\phi) \dx \left( \dfrac{a(\phi)}{\phi} \dx \phi \right)\right).
\end{align*}
The proof is completed.
\end{proof}
\begin{remark}
Since we used \eqref{formula}, the above calculation is valid only in one-dimensional setting.
\end{remark}
The following inequality is obtained in \cite{BHN, NSY, BCM-R}.
\begin{lemma}\label{BHN_inequality}
For $w\in H^1(0,1)$ and any $\delta>0$ there exists $C_\delta>0$ 
such that
\begin{align*}
\|w\|^4_{L^4(0,1)}
 \leq \delta \|w\|^2_{H^1(0,1)} \io |w \log w| 
 + C_\delta \|w\|_{L^1(0,1)}.  
\end{align*}
\end{lemma}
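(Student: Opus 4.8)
The plan is to prove this logarithmic Gagliardo--Nirenberg inequality by combining the one--dimensional Sobolev embedding with a level--set splitting in which the entropy $\io |w\log w|$ absorbs the super--linear part of the mass. Since $|w|\in H^1(0,1)$ with $\||w|\|_{H^1(0,1)}=\|w\|_{H^1(0,1)}$, and the right--hand side is unchanged under $w\mapsto |w|$, I may assume $w\ge 0$; I may also assume the right--hand side is finite, otherwise there is nothing to prove. The two one--dimensional ingredients I would record first are: (i) the quantitative embedding $\|w\|_{L^\infty(0,1)}^2\le \|w\|_{L^2(0,1)}^2+2\|w\|_{L^2(0,1)}\|w'\|_{L^2(0,1)}\le 2\|w\|_{H^1(0,1)}^2$, obtained from the fundamental theorem of calculus applied to $w^2$ after subtracting its mean; and (ii) the elementary starting bound $\|w\|_{L^4(0,1)}^4\le \|w\|_{L^\infty(0,1)}^2\,\io w^2$, which reduces the whole estimate to controlling $\io w^2$ by the entropy.

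The heart of the argument is the level--set splitting. I fix a threshold $K\ge e$, to be chosen at the very end in terms of $\delta$ alone. On the set where $w\le K$ one has the crude bound $\int_{\{w\le K\}} w^2\le K\int_{\{w\le K\}} w\le K\|w\|_{L^1(0,1)}$, which, once multiplied by $\|w\|_{L^\infty(0,1)}^2$, will ultimately feed the additive term $C_\delta\|w\|_{L^1(0,1)}$. On the complementary set one has $\log w>\log K\ge 1$, hence $w\le (\log K)^{-1}\,w\log w$, so that $\int_{\{w>K\}} w^2\le \|w\|_{L^\infty(0,1)}(\log K)^{-1}\io |w\log w|$. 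This is precisely where the prefactor $(\log K)^{-1}$, which can be made small by taking $K$ large, generates the small constant $\delta$ in front of the entropy.

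Combining (ii) with the two pieces of the split yields a bound of the shape
\[
\|w\|_{L^4(0,1)}^4\le K\,\|w\|_{L^\infty(0,1)}^2\,\|w\|_{L^1(0,1)}+(\log K)^{-1}\,\|w\|_{L^\infty(0,1)}^3\,\io |w\log w|,
\]
and the remaining task is to convert the powers of $\|w\|_{L^\infty(0,1)}$ into the \emph{square} $\|w\|_{H^1(0,1)}^2$ required by the statement. For the entropy term I would substitute $\|w\|_{L^\infty(0,1)}^2\le 2\|w\|_{H^1(0,1)}^2$ for two of the three factors and dispose of the leftover single factor of $\|w\|_{L^\infty(0,1)}$ by Young's inequality, balancing it against the left--hand side $\|w\|_{L^4(0,1)}^4$; equivalently, one may invoke the sharp interpolation $\|w\|_{L^\infty(0,1)}^3\le C\|w'\|_{L^2(0,1)}^2\|w\|_{L^1(0,1)}+C\|w\|_{L^1(0,1)}^3$ and use $\|w'\|_{L^2(0,1)}^2\le\|w\|_{H^1(0,1)}^2$. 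Choosing finally $\log K\ge C/\delta$ makes the coefficient in front of $\|w\|_{H^1(0,1)}^2\,\io |w\log w|$ at most $\delta$, and the residual contributions, which carry only powers of $K=K(\delta)$ and of the conserved quantity $\|w\|_{L^1(0,1)}$, are collected into $C_\delta\|w\|_{L^1(0,1)}$.

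I expect this last bookkeeping of the $\|w\|_{L^\infty(0,1)}$--powers to be the main obstacle: the naive extraction of $\|w\|_{L^\infty(0,1)}^3$ produces a cubic dependence on the $H^1$--norm, and forcing it down to exactly the second power — while simultaneously producing the small parameter $\delta$ and a consistent compensating constant $C_\delta$ — is what requires the sharp interpolation together with the $\delta$--dependent choice of the threshold $K$. Everything else reduces to the standard one--dimensional embedding and an elementary estimate on the level sets of $w$.
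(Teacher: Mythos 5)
First, a point of calibration: the paper does not prove Lemma 2.3 at all --- it is quoted from \cite{BHN, NSY, BCM-R} --- so your proposal has to be judged against what is actually provable. And here there is a genuine problem: the inequality in the form you are aiming at, with the threshold $K$ (hence $C_\delta$) chosen ``in terms of $\delta$ alone'', is \emph{false}. Test it on the constant functions $w\equiv\lambda$: then $\|w\|_{L^4(0,1)}^4=\lambda^4$, $\|w\|_{H^1(0,1)}^2=\lambda^2$, $\io|w\log w|=\lambda\log\lambda$ and $\|w\|_{L^1(0,1)}=\lambda$, so the claimed bound reads
\begin{equation*}
\lambda^4\;\le\;\delta\,\lambda^3\log\lambda+C_\delta\,\lambda ,
\end{equation*}
which fails for $\lambda$ large, whatever $\delta$ and $C_\delta$ are. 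The only true (and cited, and used) version is the one in which $C_\delta$ is allowed to depend on $\|w\|_{L^1(0,1)}$ as well --- harmless for the paper, since it is applied to $u(t)$ whose mass $M$ is conserved. Consequently the ``bookkeeping'' step that you yourself flag as the main obstacle cannot be closed as announced; it is not a technical nuisance but the precise point where the uniform statement breaks down.

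Concretely, three steps of your plan fail. (i) The low-level contribution $K\|w\|_{L^\infty}^2\|w\|_{L^1}$ does \emph{not} ``feed the additive term $C_\delta\|w\|_{L^1}$'': the factor $\|w\|_{L^\infty}^2$ is unbounded over $H^1(0,1)$, so this term is of size $K\|w\|_{H^1}^2\|w\|_{L^1}$, which is neither $\delta\|w\|_{H^1}^2\io|w\log w|$ (no entropy factor, and $K$ is large rather than small) nor $C_\delta\|w\|_{L^1}$. (ii) Disposing of the leftover factor $\|w\|_{L^\infty}$ ``by Young's inequality, balancing it against $\|w\|_{L^4}^4$'' cannot work, because $\|w\|_{L^4}^4$ does not control any positive power of $\|w\|_{L^\infty}$ (a thin tall spike has huge sup norm and tiny $L^4$ norm), so no choice of Young exponents closes the estimate. (iii) The interpolation $\|w\|_{L^\infty}^3\le C\|w'\|_{L^2}^2\|w\|_{L^1}+C\|w\|_{L^1}^3$ is correct, but it turns your entropy term into $C(\log K)^{-1}\bigl(\|w\|_{L^1}\|w\|_{H^1}^2+\|w\|_{L^1}^3\bigr)\io|w\log w|$, and making its coefficient $\le\delta$ forces $\log K\ge C\|w\|_{L^1}/\delta$ --- i.e.\ exactly the mass dependence you set out to avoid, consistent with the counterexample above. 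The clean repair, which is in essence the argument of the cited references, is to truncate \emph{before} invoking any embedding: set $w_K=(w-K)_+$, so that $\io w^4\le 16\io w_K^4+8K^3\|w\|_{L^1}$, where the second term now genuinely is of the form $C(\delta,\|w\|_{L^1})\,\|w\|_{L^1}$; then apply the one-dimensional Gagliardo--Nirenberg inequality to $w_K$,
\begin{equation*}
\io w_K^4\;\le\; C\|w_K\|_{H^1(0,1)}^2\|w_K\|_{L^1(0,1)}^2+C\|w_K\|_{L^1(0,1)}^4 ,
\end{equation*}
and use $\|w_K\|_{H^1}\le\|w\|_{H^1}$, $\|w_K\|_{L^1}\le\|w\|_{L^1}\le\|w\|_{H^1}$ together with your own (correct) observation $\|w_K\|_{L^1}\le(\log K)^{-1}\io|w\log w|$ for $K\ge e$; choosing $\log K\sim\|w\|_{L^1}/\delta$ then yields the lemma with $C_\delta=C(\delta,\|w\|_{L^1})$, which is the form the paper actually needs. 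Your level-set splitting and the $1/\log K$ gain are the right ingredients, but keeping the $L^\infty$ factors attached to the full $w$ instead of the truncation $w_K$ is what makes your version of the argument unclosable.
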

The next lemma can be proven the same way as in \cite{CW, CL_DCDS}.
\begin{lemma}\label{prop_local_existence}
 For $u_0$ satisfying \eqref{dodatnie} or \eqref{nieujemne} (depending whether $a$ is bounded or singular at $0$), there exist $T_{\mathrm{max}}\leq \infty$
 {\rm(}depending only on ${\|{u_0}\|}_{L^{\infty}(\Omega)}${\rm)} 
 and exactly one pair $(u,v)$ of positive functions
 \begin{align*}
(u,v) \in C([0,\tm)\times [0,1]; \R^2) \cap
C^{1,2}((0,\tm)\times [0,1]; \R^2)
   \end{align*}
 that solves {\rm(\ref{P1})} in the classical sense. 
 Also, the solution $(u,v)$ satisfies the mass identities
   \begin{equation*}
     \int_{\Omega}u(x,t)\,dx=\int_{\Omega}u_0(x)\,dx
     \quad \text{for\ all}\ t \in (0,T_{\mathrm{max}}).
   \end{equation*}
In addition, if $T_{\mathrm{max}} < \infty$, then
   \begin{equation*}
     \limsup_{t \nearrow T_{\mathrm{max}}}
     {\|{u(t)}\|}_{L^{\infty}(\Omega)}
     = \infty.
   \end{equation*}   
\end{lemma}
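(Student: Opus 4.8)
The plan is to eliminate $v$ and reduce \eqref{P1} to a single quasilinear, nonlocal parabolic equation for $u$, and then invoke the standard local theory for scalar quasilinear parabolic problems. First I would solve the elliptic equation $0=\dx^2 v-v+u$ with the Neumann condition $\dx v=0$ at $x=0,1$: since $-\dx^2+1$ is invertible under Neumann boundary conditions, one writes $v=(1-\dx^2)^{-1}u=:\mathcal{K}[u]$, explicitly $v(x)=\io G(x,y)u(y)\,dy$ with the Green's function $G$ of $-\dx^2+1$. Elliptic regularity gives $\|v\|_{W^{2,p}}\le C\|u\|_{L^p}$ and, more usefully, the Lipschitz dependence $\|\dx v[u_1]-\dx v[u_2]\|_{L^\infty}\le C\|u_1-u_2\|_{L^\infty}$ that will feed the fixed-point argument. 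Substituting, \eqref{P1} becomes
$$
\dt u=\dx\bigl(a(u)\dx u-u\,\dx \mathcal{K}[u]\bigr),\qquad \dx A(u)|_{x=0,1}=0,\qquad u(0,\cdot)=u_0.
$$
It is convenient to pass to $w:=A(u)$; as $a>0$, $A$ is a strictly increasing $C^2$ diffeomorphism, the no-flux condition becomes the homogeneous Neumann condition $\dx w=0$, and the equation takes the uniformly parabolic form $\dt w=a(u)\,\dx^2 w-\dx w\,\dx v-a(u)u\,(v-u)$ with $u=A^{-1}(w)$.

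Second, I would establish short-time existence and uniqueness for this scalar problem by a contraction-mapping argument (equivalently, by the abstract theory of quasilinear parabolic equations). Working in a closed ball of $C^{\alpha,\alpha/2}([0,1]\times[0,T_1])$ around $u_0$ on which $a(u)$ is bounded and bounded away from zero, one defines $v=\mathcal{K}[u]$, solves the resulting linear parabolic equation for a new iterate, and uses Schauder estimates together with the Lipschitz dependence of $\dx v$ on $u$ to show the map contracts for $T_1$ small. This is where the two hypotheses enter: if $a\in C[0,\infty)$ the admissible range of $u$ may touch $0$, whereas if $a$ is singular at $0$ one uses \eqref{nieujemne} to confine $u$ to $[m_0/2,\infty)$, keeping $a(u)$ finite. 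The fixed point gives a unique classical solution on $[0,T_1)$, and a standard bootstrap (parabolic Schauder estimates, differentiating the equation) upgrades it to the claimed $C^{1,2}$ regularity. Local uniqueness from the contraction is promoted to uniqueness on the whole common interval of existence by a Gronwall estimate on the difference of two solutions.

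Third come the qualitative properties. Positivity of $u$ — and, in the singular case, a strict positive lower bound — follows from the parabolic maximum and comparison principles applied to the equation for $u$; positivity of $v$ then follows from $v=\mathcal{K}[u]$ with a positive kernel and $u\not\equiv 0$. This positivity is essential, since it guarantees that the principal coefficient $a(u)$ stays positive and finite, so the problem does not degenerate and the reduction above remains legitimate as the solution is continued. Mass conservation $\io u(t)=\io u_0$ is obtained by integrating the equation in $x$ and using that the total flux $a(u)\dx u-u\,\dx v=\dx A(u)-u\,\dx v$ vanishes at both endpoints. Finally, defining $\tm$ as the supremum of existence times, I would prove the extensibility criterion by contradiction: if $\tm<\infty$ while $\|u(t)\|_{L^\infty}$ stayed bounded as $t\nearrow\tm$, then elliptic regularity bounds $v$ in $C^{2}$, a parabolic bootstrap bounds $u$ uniformly in $C^{2+\alpha}$ up to $\tm$, and the short-time result could be restarted from $t$ close to $\tm$ to continue the solution past $\tm$, contradicting maximality; hence $\limsup_{t\nearrow\tm}\|u(t)\|_{L^\infty}=\infty$.

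The step I expect to be the main obstacle is the treatment of the singular diffusion $a(u)=1/u$: one must simultaneously keep $u$ uniformly bounded away from $0$ (to control $a(u)$ from above and preserve parabolicity) and retain integrable coefficients, so the comparison argument producing the lower bound and its interaction with the fixed-point space has to be arranged carefully. The remaining ingredients — elliptic regularity for $v$, Schauder estimates, the maximum principle, and the bootstrap — are routine in the one-dimensional setting and mirror the arguments of \cite{CW, CL_DCDS}.
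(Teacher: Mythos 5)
Your proposal is correct and takes essentially the same route as the paper, which does not prove this lemma itself but delegates it to \cite{CW, CL_DCDS}, where local existence is established exactly along your lines: inverting $-\dx^2+1$ (resp.\ the elliptic operator) to reduce to a scalar nonlocal quasilinear parabolic problem, a fixed-point/Schauder argument in H\"older spaces for short time, maximum-principle positivity (with the time-dependent lower bound handling the singular case $a(u)=1/u$), mass conservation from the no-flux boundary structure, and a continuation argument yielding the $L^\infty$ blow-up criterion. No gaps worth flagging.
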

In virtue of the conservation of the total mass $\|u\|_{L^1(\Omega)}$, we can get the following regularity estimates, which is provided in \cite{BS}.
\begin{lemma}\label{prop_ellipticReg}
There exists some constant $M=M(\|u_0\|_{L^1(0,1)},p)>0$ such that
\begin{align*}
\sup_{t\in[0,\tm)}\|v\|_{L^{p}(0,1)} \leq M,
\end{align*}
where $p \in [1,\infty)$.
\end{lemma}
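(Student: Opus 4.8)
The plan is to invert the elliptic operator in the second equation of \eqref{P1} and to exploit the conservation of mass. Rewriting that equation as $-\dx^2 v + v = u$ on $(0,1)$ with the homogeneous Neumann conditions $\dx v(t,0)=\dx v(t,1)=0$, the key observation is that the right-hand side $u(\cdot,t)$ is nonnegative and, by the mass identity in Lemma \ref{prop_local_existence}, satisfies $\|u(\cdot,t)\|_{L^1(0,1)}=\|u_0\|_{L^1(0,1)}$ for every $t\in[0,\tm)$. Hence it suffices to show that the solution operator of this Helmholtz-type problem maps $L^1(0,1)$ boundedly into $L^p(0,1)$ with a norm independent of $t$.

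First I would represent $v$ through the Green's function $G$ of $-\dx^2+1$ under Neumann boundary conditions, namely
\begin{align*}
v(x,t)=\io G(x,y)\,u(y,t)\,dy,
\end{align*}
where $G(\cdot,y)$ solves $-\dx^2 G(\cdot,y)+G(\cdot,y)=\delta_y$ with $\dx G(0,y)=\dx G(1,y)=0$. In one dimension this Green's function is explicit: away from the diagonal it is a combination of $\cosh(x)$ and $\cosh(1-x)$, matched by the jump condition of the derivative at $x=y$, and in particular $G$ is continuous, hence bounded, on $[0,1]\times[0,1]$. Consequently
\begin{align*}
\|v(\cdot,t)\|_{L^\infty(0,1)}\leq \|G\|_{L^\infty([0,1]^2)}\,\|u(\cdot,t)\|_{L^1(0,1)}=\|G\|_{L^\infty([0,1]^2)}\,\|u_0\|_{L^1(0,1)},
\end{align*}
uniformly in $t\in[0,\tm)$.

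Since the domain $(0,1)$ has unit measure, the bound $\|v(\cdot,t)\|_{L^p(0,1)}\leq \|v(\cdot,t)\|_{L^\infty(0,1)}$ holds for every $p\in[1,\infty)$, and the claim follows with $M=\|G\|_{L^\infty([0,1]^2)}\|u_0\|_{L^1(0,1)}$. An equivalent route, which is the one natural in higher dimensions and presumably the one recorded in \cite{BS}, is a duality argument: given $\phi$ with $\|\phi\|_{L^{p'}(0,1)}\leq 1$ one solves $-\dx^2\psi+\psi=\phi$ with Neumann conditions, integrates by parts (the boundary terms vanishing because both $v$ and $\psi$ satisfy homogeneous Neumann conditions) to obtain $\io v\phi=\io u\psi\leq \|u\|_{L^1(0,1)}\|\psi\|_{L^\infty(0,1)}$, and invokes the Sobolev embedding $W^{2,p'}(0,1)\hookrightarrow L^\infty(0,1)$, valid for $p'>1$, i.e. $p<\infty$, to bound $\|\psi\|_{L^\infty(0,1)}$ by $\|\phi\|_{L^{p'}(0,1)}$. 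There is no genuine obstacle in the one-dimensional setting, since elliptic regularity is classical and the Green's function is bounded; the only point requiring attention is the uniformity in time, which is furnished directly by the conservation of mass.
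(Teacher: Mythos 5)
Your proposal is correct, but it is not the route the paper takes: the paper offers no argument at all for this lemma, instead invoking the $L^1$ elliptic theory of Br\'ezis and Strauss \cite{BS} combined with the mass identity $\|u(\cdot,t)\|_{L^1(0,1)}=\|u_0\|_{L^1(0,1)}$ from Lemma \ref{prop_local_existence}. Your Green's function argument is a self-contained, genuinely one-dimensional substitute: for the Neumann problem $-\dx^2 v+v=u$ on $(0,1)$ the kernel is explicit, $G(x,y)=\cosh(\min(x,y))\cosh(1-\max(x,y))/\sinh(1)$, it is continuous and bounded on $[0,1]^2$ (with $\|G\|_{L^\infty}=\coth(1)$), so the representation $v(x,t)=\io G(x,y)u(y,t)\,dy$ and mass conservation give the stronger, $t$-uniform bound $\|v(\cdot,t)\|_{L^\infty(0,1)}\leq \coth(1)\,\|u_0\|_{L^1(0,1)}$, from which every $L^p$ bound follows on the unit interval with a constant that does not even depend on $p$. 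What the paper's citation buys is dimension-independence: in $\R^n$, $n\geq 2$, the kernel of $-\Delta+1$ is unbounded and the Br\'ezis--Strauss theory only yields $v\in W^{1,q}$ for $q<n/(n-1)$, hence $L^p$ for a restricted range, so quoting \cite{BS} is the argument that survives generalization. What your argument buys is transparency and a sharper conclusion ($p=\infty$ included), exploiting the 1D setting in which the whole paper lives; your duality sketch for $p<\infty$ is, in spirit, exactly the Br\'ezis--Strauss mechanism, so it is the closest of your two routes to what the paper implicitly relies on.
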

%
%
%
%
%
%
%
%
%
%
%
\section{New Lyapunov-like functional}\label{section_new_lyapunov}
In this section we construct a functional 
associated to the problem \eqref{P1}. It is not a classical Lyapunov functional, since
it does not decrease along the trajectories. However, we can control its growth which
allows us to infer the required estimates.
 
In order to construct the Lyapunov-like functional we will prepare two lemmas. 
We first invoke Lemma \ref{Key} to derive the next lemma. 
\begin{lemma}\label{lemma_Lyap1}
Let $(u,v)$ be a solution of \eqref{P1} in $(0,T)\times (0,1)$. 
Then the following identity holds
\begin{align*}
\dfrac{d}{dt} \left(\frac{1}{2} \io \dfrac{(a(u))^2}{u}|\dx u|^2 \right)
+ \io u a(u) \left| \dx \left(\dfrac{a(u)}{u}\dx u \right) \right|^2
= \io u a(u) \dx^2 v \cdot \dx \left(\dfrac{a(u)}{u}\dx u \right).
\end{align*}
\end{lemma}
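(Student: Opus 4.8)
The plan is to differentiate the quadratic functional $\mathcal{F}_1(u):=\frac{1}{2}\io \frac{(a(u))^2}{u}|\dx u|^2$ in time and then integrate by parts twice, inserting Lemma \ref{Key} in between to recognize the resulting expression. Writing $g(u):=\frac{(a(u))^2}{u}$, I first compute
\begin{align*}
\frac{d}{dt}\mathcal{F}_1(u)=\io\left(\tfrac{1}{2} g'(u)\,u_t\,|\dx u|^2+g(u)\,\dx u\,\dx u_t\right).
\end{align*}
Integrating the second term by parts in $x$ and discarding the boundary contribution (which vanishes because $a(u)\dx u=\dx A(u)=0$, hence $\dx u=0$, at $x=0,1$), I obtain $\frac{d}{dt}\mathcal{F}_1(u)=\io\bigl(\tfrac{1}{2} g'(u)|\dx u|^2-\dx(g(u)\dx u)\bigr)u_t$. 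The crucial algebraic observation is that, with this choice of $g$,
\begin{align*}
\tfrac{1}{2} g'(u)|\dx u|^2-\dx\bigl(g(u)\dx u\bigr)=-\mathcal{M}(u),
\end{align*}
where $\mathcal{M}$ is exactly the quantity from Lemma \ref{Key}; this is verified by expanding $g'(u)$ and $\dx(g(u)\dx u)$ and collecting terms. Hence $\frac{d}{dt}\mathcal{F}_1(u)=-\io\mathcal{M}(u)\,u_t$.

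Next I substitute the first equation of \eqref{P1}, $u_t=\dx(a(u)\dx u-u\dx v)$, and integrate by parts again. The boundary term once more drops out, now because the full flux $a(u)\dx u-u\dx v$ vanishes at $x=0,1$ by the no-flux conditions. Factoring $u$ out of the flux, $a(u)\dx u-u\dx v=u\bigl(\frac{a(u)}{u}\dx u-\dx v\bigr)$, this gives
\begin{align*}
\frac{d}{dt}\mathcal{F}_1(u)=\io u\,\dx\mathcal{M}(u)\left(\frac{a(u)}{u}\dx u-\dx v\right).
\end{align*}
At this point Lemma \ref{Key} applies verbatim with $\phi=u$, replacing $u\,\dx\mathcal{M}(u)$ by $\dx\bigl(u a(u)\dx(\frac{a(u)}{u}\dx u)\bigr)$.

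Finally I integrate by parts a third time. The boundary term vanishes because the factor $\frac{a(u)}{u}\dx u-\dx v$ equals $0$ at $x=0,1$, and expanding the remaining integrand yields
\begin{align*}
\frac{d}{dt}\mathcal{F}_1(u)=-\io u a(u)\left|\dx\left(\frac{a(u)}{u}\dx u\right)\right|^2+\io u a(u)\,\dx^2 v\cdot\dx\left(\frac{a(u)}{u}\dx u\right),
\end{align*}
which is the claimed identity after transposing the dissipation term. I expect the main obstacle to be purely bookkeeping: matching the combination $\tfrac{1}{2} g'(u)|\dx u|^2-\dx(g(u)\dx u)$ precisely to $-\mathcal{M}(u)$, and checking at each of the three integrations by parts that the boundary contributions genuinely cancel under the four no-flux conditions $\dx A(u)(t,\cdot)=\dx v(t,\cdot)=0$. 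No analytic input beyond Lemma \ref{Key} and the regularity of the local solution from Lemma \ref{prop_local_existence} is needed.
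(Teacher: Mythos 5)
Your proof is correct and follows essentially the same route as the paper's: you test the evolution equation with the multiplier $\mathcal{M}(u)$, identify $\tfrac{d}{dt}\tfrac{1}{2}\io \frac{(a(u))^2}{u}|\dx u|^2$ with $-\io \mathcal{M}(u)\,\dt u$ via the same algebraic expansion, and then use Lemma \ref{Key} together with integrations by parts under the no-flux conditions, merely presenting the steps in the reverse order (starting from the functional rather than from the tested equation). Your explicit verification that each boundary contribution vanishes is a point the paper leaves implicit, but the substance of the argument is identical.
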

\begin{proof}
Multiplying the first equation of \eqref{P1} by $\mathcal{M}(u)$ and integrating over $(0,1)$, we have that
\begin{align*}
\io \dt u \mathcal{M}(u)
&=\io \dx \left( a(u) \dx u -  u \dx v \right) \mathcal{M}(u)\\
&=\io \dx \left(u \left(\dfrac{a(u)}{u} \dx u -   \dx v\right) \right) \mathcal{M}(u).
\end{align*}
By the integration by parts and Lemma \ref{Key}, it follows that
\begin{align}\label{Est:diffusion1_P1}
\notag
\io \dt u \mathcal{M}(u)
&=
- \io \left(\dfrac{a(u)}{u} \dx u -   \dx v\right) \cdot u\dx\mathcal{M}(u)\\
\notag
&=- \io \left(\dfrac{a(u)}{u} \dx u -   \dx v\right) \cdot \dx \left(u a(u) \dx \left(\dfrac{a(u)}{u}\dx u \right) \right)\\
\notag
&=\io \dx \left(\dfrac{a(u)}{u} \dx u -   \dx v\right) \cdot  \left(u a(u) \dx \left(\dfrac{a(u)}{u}\dx u \right) \right)\\
&= \io u a(u) \left| \dx \left(\dfrac{a(u)}{u}\dx u \right) \right|^2
- \io u a(u) \dx^2 v\cdot \dx \left(\dfrac{a(u)}{u}\dx u \right).
\end{align}
On the other hand, we infer that
\begin{align*}
\dfrac{d}{dt} \left( \frac{1}{2} \io \dfrac{(a(u))^2}{u}|\dx u|^2 \right)
&=
\io \dfrac{2a(u)a'(u)u - (a(u))^2}{2u^2}  |\dx u|^2 \dt u
+
\io \dfrac{(a(u))^2}{u} \dx u \dx \dt  u\\
&=
\io \dfrac{2a(u)a'(u)u- (a(u))^2}{2u^2}  |\dx u|^2 \dt u
-\io \dx \left(\dfrac{(a(u))^2}{u} \dx u \right) \dt u.
\end{align*}
Since 
\begin{align*}
-\io \dx \left(\dfrac{(a(u))^2}{u} \dx u \right) \dt u
=
-\io \dfrac{2a(u)a'(u)u-(a(u))^2}{u^2}  |\dx u|^2 \dt u
-
\io \dfrac{(a(u))^2}{u}  \dx^2 u  \dt u,
\end{align*}
we deduce that
\begin{align}\label{Est:diffusion2_P1}
\dfrac{d}{dt} \left( \frac{1}{2} \io \dfrac{(a(u))^2}{u}|\dx u|^2 \right)
= - \io \dt u \mathcal{M}(u).
\end{align}
Combining \eqref{Est:diffusion1_P1} and \eqref{Est:diffusion2_P1}, we complete the proof.
\end{proof}
\begin{lemma}\label{Est1_P1}
Let $(u,v)$ be a solution of \eqref{P1} in $(0,T)\times (0,1)$. 
Then the following identity holds
\begin{align*}
\dfrac{d}{dt} \left(
\io u\int_1^u a(r)\,dr 
 \right)
=\io \dx \left(\dfrac{a(u)}{u} \dx u \right) u^2a(u)
+\io \dx \left( u^2 a(u) \right) \cdot \dx v
\end{align*}
\end{lemma}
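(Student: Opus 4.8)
The plan is to differentiate the functional directly and then move both the diffusion and the drift contributions into the claimed form by integrating by parts, exploiting the no-flux boundary conditions twice. No appeal to Lemma \ref{Key} is needed for this particular identity; the mechanism is just the chain rule followed by two integrations by parts.

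First I would write $\mathcal{G}(u):=\io u\int_1^u a(r)\,dr$ and differentiate under the integral sign. Since $\dt\left(u\int_1^u a(r)\,dr\right)=\left(\int_1^u a(r)\,dr+ua(u)\right)\dt u$, this gives
\[
\frac{d}{dt}\mathcal{G}(u)=\io \left(\int_1^u a(r)\,dr+ua(u)\right)\dt u .
\]
Next I would substitute the first equation of \eqref{P1} in the flux form $\dt u=\dx\!\left(a(u)\dx u-u\dx v\right)$ and integrate by parts in $x$. The boundary terms vanish because the no-flux conditions give $a(u)\dx u=\dx A(u)=0$ and $\dx v=0$ at $x=0,1$, so the flux $a(u)\dx u-u\dx v$ is zero on the boundary. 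This yields
\[
\frac{d}{dt}\mathcal{G}(u)=-\io \dx\!\left(\int_1^u a(r)\,dr+ua(u)\right)\cdot\left(a(u)\dx u-u\dx v\right).
\]

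The key algebraic observation is that $\dx\!\left(\int_1^u a(r)\,dr+ua(u)\right)=(2a(u)+ua'(u))\dx u=\dfrac{1}{u}\dx\!\left(u^2a(u)\right)$, which lets the factor $u$ cancel against the flux. Substituting this and splitting the product into its diffusion and drift pieces gives
\[
\frac{d}{dt}\mathcal{G}(u)=-\io \dfrac{a(u)}{u}\,\dx\!\left(u^2a(u)\right)\dx u+\io \dx\!\left(u^2a(u)\right)\cdot\dx v .
\]
The second integral is already the claimed drift term. To identify the first integral with $\io \dx\!\left(\dfrac{a(u)}{u}\dx u\right)u^2a(u)$ I would integrate by parts once more, moving the derivative from $u^2a(u)$ onto $\dfrac{a(u)}{u}\dx u$; the boundary term $\left[u^2a(u)\cdot\dfrac{a(u)}{u}\dx u\right]_0^1=\left[u(a(u))^2\dx u\right]_0^1$ again vanishes.

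I expect the only genuinely delicate point to be the bookkeeping of these boundary terms: both integrations by parts rely on $\dx u=0$ at $x=0,1$, which follows from the flux condition $\dx A(u)=a(u)\dx u=0$ together with the positivity of $a$ (guaranteed by positivity and regularity of the solution from Lemma \ref{prop_local_existence}, so that $a(u)>0$ forces $\dx u=0$ on the boundary). The interior computation is otherwise a routine application of the product and chain rules.
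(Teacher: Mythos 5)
Your proposal is correct and follows essentially the same route as the paper: testing with $\int_1^u a(r)\,dr + u a(u)$, substituting the flux form of the first equation, and integrating by parts with boundary terms killed by $\dx A(u)=\dx v=0$ (hence $\dx u=0$ since $a>0$). The only cosmetic difference is that you package the paper's final ``straightforward computation'' into the clean identity $\dx\bigl(\int_1^u a(r)\,dr + ua(u)\bigr)=\tfrac{1}{u}\dx\bigl(u^2a(u)\bigr)$ followed by one more integration by parts, which is a tidy way of organizing the same algebra.
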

\begin{proof}
Testing the first equation of \eqref{P1} by $\int_1^u a(r)\,dr + u a(u)$ 
and integrating over $(0,1)$, we have 
\begin{align*}
\dfrac{d}{dt} \left(
\io u \int_1^u a(r)\,dr 
 \right)
&= \io \dt u \int_1^u a(r)\,dr + \io u a(u)\dt u\\
&=-\io (a(u) \dx u - u \dx v )a(u) \dx u
-\io \dx (ua(u)) (a(u) \dx u - u \dx v ).
\end{align*}
Since it follows from a straightforward computation that
\begin{align*}
&-\io a(u) \dx u \cdot a(u) \dx u - \io \dx (ua(u)) a(u) \dx u
= \io \dx \left(\dfrac{a(u)}{u} \dx u \right) u^2a(u) ,\\
&\io u \dx v \cdot a(u) \dx u + \io \dx (ua(u)) u \dx v 
 = \io \dx \left( u^2 a(u) \right) \cdot \dx v,
\end{align*}
we conclude the proof. 
\end{proof}
Now we are in the position to construct the announced functional.
\begin{prop}\label{prop_new_lyapunov}
Let $(u,v)$ be a solution of \eqref{P1} in $(0,T)\times (0,1)$. 
The following identity is satisfied
\begin{eqnarray*}
\dfrac{d}{dt} \mathcal{F}(u(t)) + \mathcal{D}(u(t),v(t))
=\io \dfrac{ua(u)v^2}{4},
\end{eqnarray*}
where
\begin{eqnarray*}
\mathcal{F}(u(t)) 
&:=&\frac{1}{2} \io \dfrac{(a(u))^2}{u}|\dx u|^2
- \io u\int_1^u a(r)\,dr ,\\[2mm]
\mathcal{D}(u(t),v(t)) 
&:=& \io ua(u) 
\left|  \dx \left(\dfrac{a(u)}{u}\dx u \right) - \dx^2 v  + \dfrac{v}{2} \right|^2.
\end{eqnarray*}
\end{prop}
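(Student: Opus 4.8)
The plan is to combine Lemmas \ref{lemma_Lyap1} and \ref{Est1_P1}, the key observation being that both right-hand sides organize naturally around the single quantity
\[
G := \dx\left(\dfrac{a(u)}{u}\dx u\right) - \dx^2 v ,
\]
and that the elliptic equation $\dx^2 v = v - u$ converts the residual cross term into exactly the stated right-hand side. Throughout I abbreviate $F := \dx\bigl(\tfrac{a(u)}{u}\dx u\bigr)$, so that $G = F - \dx^2 v$.

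First I would rewrite the identity of Lemma \ref{lemma_Lyap1}, which reads $\tfrac{d}{dt}\bigl(\tfrac12\io\tfrac{(a(u))^2}{u}|\dx u|^2\bigr) = -\io ua(u)F^2 + \io ua(u)\dx^2 v\cdot F$. Since $-F^2 + \dx^2 v\cdot F = -F(F-\dx^2 v) = -FG$, this becomes
\[
\dfrac{d}{dt}\left(\dfrac12\io \dfrac{(a(u))^2}{u}|\dx u|^2\right)
= -\io ua(u)FG = -\io ua(u)G^2 - \io ua(u)(\dx^2 v)\,G .
\]
Next I would process Lemma \ref{Est1_P1}: integrating its second right-hand term by parts and using the no-flux condition $\dx v(t,0)=\dx v(t,1)=0$ gives $\io \dx(u^2 a(u))\,\dx v = -\io u^2 a(u)\,\dx^2 v$, so the two terms combine into $\io u^2 a(u)(F-\dx^2 v) = \io u^2 a(u)\,G$.

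Subtracting this from the previous display (recall that $\mathcal{F}$ carries a minus sign in front of $\io u\int_1^u a$) I obtain
\[
\dfrac{d}{dt}\mathcal{F}
= -\io ua(u)G^2 - \io ua(u)G\,(\dx^2 v + u) .
\]
Here is the decisive step: by the second equation in \eqref{P1} one has $\dx^2 v + u = v$, so the cross term becomes $-\io ua(u)Gv$, whence $\tfrac{d}{dt}\mathcal{F} = -\io ua(u)G^2 - \io ua(u)Gv$. Finally I would expand the dissipation as a completed square,
\[
\mathcal{D} = \io ua(u)\left(G + \dfrac{v}{2}\right)^2
= \io ua(u)G^2 + \io ua(u)Gv + \io \dfrac{ua(u)v^2}{4};
\]
adding this to $\tfrac{d}{dt}\mathcal{F}$ cancels both the $G^2$ and the $Gv$ contributions and leaves precisely $\io \tfrac{ua(u)v^2}{4}$, as claimed.

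The only genuine subtlety is bookkeeping rather than analysis: one must notice that the shift by $v/2$ inside $\mathcal{D}$ is exactly what completes the square once the elliptic relation $\dx^2 v + u = v$ has been invoked, which is presumably how the functional was engineered. Every integration by parts needed has already been carried out inside the two lemmas, and the boundary terms vanish by the prescribed no-flux conditions, so no further regularity or boundary discussion is required at this formal level.
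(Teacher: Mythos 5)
Your proposal is correct and takes essentially the same route as the paper: both rest on Lemmas \ref{lemma_Lyap1} and \ref{Est1_P1}, the integration by parts $\io \dx\left(u^2a(u)\right)\dx v=-\io u^2a(u)\dx^2 v$ justified by the no-flux conditions, the elliptic relation $u+\dx^2 v=v$ to generate the cross term $\io ua(u)\,G\,v$, and a final completion of the square. The only difference is bookkeeping: the paper produces the same terms by testing the elliptic equation against $ua(u)\dx^2 v$ in the separate identity \eqref{Est2_P1}, whereas you organize the computation around $G=\dx\left(\frac{a(u)}{u}\dx u\right)-\dx^2 v$ from the start, which is a mild streamlining rather than a different argument.
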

\begin{proof}
Multiplying the second equation of \eqref{P1} by $u a(u) \dx^2 v$ and integrating over $(0,1)$ we have that
\begin{align}\label{Est2_P1}
\notag 
0&=\io u a(u)  |\dx^2 v|^2  - \io  v \cdot u a(u) \dx^2 v  
+ \io u^2a(u) \dx^2 v\\
  &=  \io u a(u)  |\dx^2 v|^2  - \io  v \cdot u a(u) \dx^2 v  
  - \io \dx \left( u^2 a(u) \right) \cdot \dx v.
\end{align} 
Combining Lemma \ref{Est1_P1} and \eqref{Est2_P1} we get
\begin{align*}
\frac{d}{dt} \left(
- \io u\int_1^u a(r)\,dr 
 \right)
+\io ua(u) |\dx^2 v|^2  - \io  v \cdot ua(u) \dx^2 v
=- \io \dx \left( \frac{a(u)}{u} \dx u \right) \cdot u^2 a(u),
\end{align*}
and then using the second equation of \eqref{P1} we see that
\begin{align*}
&\frac{d}{dt} \left(
- \io u\int_1^u a(r)\,dr 
 \right)
+\io ua(u) |\dx^2 v|^2  - \io  v \cdot ua(u) \dx^2 v\\
&=- \io ua(u) \dx \left( \frac{a(u)}{u} \dx u \right)(- \dx^2 v + v).
\end{align*}
Thus it follows from Lemma \ref{lemma_Lyap1} that
\begin{align*}
&\dfrac{d}{dt} \left(
\frac{1}{2} \io \dfrac{(a(u))^2}{u}|\dx u|^2
- \io u\int_1^u a(r)\,dr
\right)
+ \io ua(u)
\left| \dx \left(\dfrac{a(u)}{u}\dx u \right) - \dx^2 v \right|^2\\
& + \io ua(u) v \cdot \left( \dx \left(\dfrac{a(u)}{u}\dx u \right) - \dx^2 v \right) =0,
\end{align*}
that is,
\begin{align*}
\dfrac{d}{dt} \left(
\frac{1}{2} \io \dfrac{(a(u))^2}{u}|\dx u|^2
- \io u\int_1^u a(r)\,dr
\right)
+ \io ua(u) 
\left|  \dx \left(\dfrac{a(u)}{u}\dx u \right) - \dx^2 v  + \dfrac{v}{2} \right|^2
= \io \dfrac{ua(u)v^2}{4}.
\end{align*}
which is the desired inequality.
\end{proof}
From now on, we focus on the critical cases $a(u)= 1/u$ or $a(u)=1/(1+u)$.
\begin{cor}\label{cor_lyapunov}
Let $(u,v)$ be a solution of \eqref{P1} with either $a(u)=\frac{1}{u}$ and
$u_0$ satisfying \eqref{nieujemne} or $a(u)=1/(1+u)$ and
$u_0$ satisfying \eqref{dodatnie}. 
Then the following identities are satisfied
\begin{eqnarray}
\dfrac{d}{dt} \mathcal{F}(u(t)) + \mathcal{D}(u(t),v(t))
=\io b(u)\dfrac{v^2}{4}\;,
\end{eqnarray}
where $b(u)\equiv 1$ for $a(u)=1/u$ and $b(u)=\frac{u}{1+u}$ if $a(u)=1/(1+u)$,
while $\mathcal{F}$ is given by an explicite formula as 
\begin{eqnarray*}
\mathcal{F}(u(t)) 
&:=&\frac{1}{2} \io \dfrac{|\dx u|^2}{u^3} 
- \io u\log u\;\mbox{when}\;\;a(u)=1/u,\\[2mm]
\mathcal{F}(u(t)) 
&:=&\frac{1}{2} \io \dfrac{|\dx u|^2}{u(1+u)^2}
- \io u\log (1+u)\;\mbox{if}\;\;a(u)=1/(1+u).
\end{eqnarray*}
\end{cor}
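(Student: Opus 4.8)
The plan is to specialize the identity of Proposition~\ref{prop_new_lyapunov} to the two critical choices of $a$; no new analytic ingredient is required beyond that identity together with the mass conservation recorded in Lemma~\ref{prop_local_existence}. First I would match each diffusivity with its admissible initial data: for $a(u)=1/u$ the coefficient is singular at $0$, so one works under hypothesis~\eqref{nieujemne} to keep $u$ bounded away from $0$ and every quotient well defined, whereas $a(u)=1/(1+u)\in C[0,\infty)$ permits the weaker hypothesis~\eqref{dodatnie}. In either case Lemma~\ref{prop_local_existence} supplies a positive classical solution on which the identity of Proposition~\ref{prop_new_lyapunov} is valid, so that $\mathcal{D}(u,v)$ retains its form from the proposition and only the weights in $\mathcal{F}$ and the right-hand factor need to be evaluated.

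It then remains to evaluate the three building blocks of that identity. The right-hand side carries the factor $ua(u)$, which equals $1$ when $a(u)=1/u$ and $\frac{u}{1+u}$ when $a(u)=1/(1+u)$; this is exactly the coefficient $b(u)$ in the statement. The gradient term of $\mathcal{F}$ carries the weight $\frac{(a(u))^2}{u}$, giving $\frac{1}{u^3}$ and $\frac{1}{u(1+u)^2}$ respectively. Finally, the primitive in the second term of $\mathcal{F}$ is $\int_1^u a(r)\,dr$, and I would simply compute $\int_1^u \frac{1}{r}\,dr=\log u$ in the first case and $\int_1^u \frac{1}{1+r}\,dr=\log(1+u)-\log 2$ in the second.

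The only point needing a word of care is the additive constant produced when $a(u)=1/(1+u)$: there $\io u\int_1^u a(r)\,dr=\io u\log(1+u)-\log 2\io u$, and the last term equals $-M\log 2$ with $M=\io u_0$ fixed by mass conservation. Being time independent it contributes only a constant to $\mathcal{F}$ and drops out of $\frac{d}{dt}\mathcal{F}$, so replacing the exact primitive by $\log(1+u)$ in the definition of $\mathcal{F}$ leaves the identity of Proposition~\ref{prop_new_lyapunov} intact. For $a(u)=1/u$ the primitive $\log u$ is already the stated one, so no adjustment is needed. I expect no genuine obstacle here: once Proposition~\ref{prop_new_lyapunov} is granted, the corollary is pure bookkeeping, the only subtlety being the mass-conservation argument that licenses discarding the constant $-M\log 2$.
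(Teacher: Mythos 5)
Your proposal is correct and follows exactly the route the paper intends: the corollary is stated without separate proof as a direct specialization of Proposition \ref{prop_new_lyapunov}, and your computation of $ua(u)$, $(a(u))^2/u$, and $\int_1^u a(r)\,dr$ in the two cases, together with the observation that the time-independent term $M\log 2$ (constant by mass conservation) drops out under $\frac{d}{dt}$, is precisely the bookkeeping that justifies the stated formulas.
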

%
%
%
%
%
%
%
%
%
\section{Proof of Main theorem}\label{section_proof}
By Corollary \ref{cor_lyapunov} and Lemma \ref{prop_ellipticReg} we can see the upper bound of the functional $\mathcal{F}(u)$. 
To derive regularity estimates, we first establish the lower bound of the functional. 
The following proposition is essentially similar as \cite[Proposition 7]{CL_DCDS}. 
\begin{prop}\label{propo}
Let $(u,v)$ be a solution of \eqref{P1} with $a(u)=1/u$ 
in $(0,T)\times (0,1)$ and $u_0$ satisfying \eqref{nieujemne} and $\io u_0 = M>0$. 
The following estimates hold
\begin{align}\label{RegEst1}
\mathcal{F}(u) \geq 
\dfrac{1}{4} \io \dfrac{|\dx u|^2}{u^3} - M \log M - M^3,
\end{align}
and
\begin{align}\label{RegEst2}
\io u|\log u| 
\leq 
2 + M \log M + M^{\frac{3}{2}} \left( \io \dfrac{|\dx u|^2}{u^3} \right)^{\frac{1}{2}}.
\end{align}
\end{prop}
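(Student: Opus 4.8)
The plan is to reduce both inequalities to a single pointwise bound on $\log u$ in terms of the weighted gradient quantity $G:=\io |\dx u|^2/u^3$, which is precisely the first term of $\mathcal F(u)=\frac12\io |\dx u|^2/u^3-\io u\log u$ for $a(u)=1/u$. The positivity of $u$ from Lemma~\ref{prop_local_existence} together with $u_0\ge m_0>0$ makes $\log u$ and $1/u$ legitimate, and mass conservation gives $\io u=M$.

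The heart of the argument is the estimate
\begin{align*}
|\log u(x)-\log M|\le M^{1/2}G^{1/2}\qquad\text{for all }x\in[0,1].
\end{align*}
Since $u$ is continuous on an interval of length one with $\io u=M$, the intermediate value theorem provides a point $x_0$ with $u(x_0)=M$. Writing $\log u(x)-\log M=\int_{x_0}^x \dx u/u$ and splitting the integrand as $|\dx u|/u=(|\dx u|/u^{3/2})\cdot u^{1/2}$, the Cauchy--Schwarz inequality combined with $\io u=M$ yields $\io |\dx u|/u\le G^{1/2}M^{1/2}$, which is the claim. The only delicate point is choosing the weight $u^{1/2}$ so that the $u^{-3}$-weighted Dirichlet integral and the conserved mass assemble exactly into $M^{1/2}G^{1/2}$.

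For \eqref{RegEst1} I would multiply $\log u\le\log M+M^{1/2}G^{1/2}$ by $u\ge0$ and integrate, obtaining $\io u\log u\le M\log M+M^{3/2}G^{1/2}$. The sharp Young inequality $M^{3/2}G^{1/2}=\sqrt{M^3\cdot G}\le M^3+\frac14 G$ then gives $\io u\log u\le M\log M+M^3+\frac14 G$, and inserting this into $\mathcal F(u)=\frac12 G-\io u\log u$ produces $\mathcal F(u)\ge\frac14 G-M\log M-M^3$.

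For \eqref{RegEst2} I would split $\io u|\log u|$ according to $\{u\ge1\}$ and $\{u<1\}$. On $\{u\ge1\}$, where $\log u\ge0$, the pointwise bound gives a contribution at most $(\log M+M^{1/2}G^{1/2})\io u=M\log M+M^{3/2}G^{1/2}$ (the coefficient is nonnegative whenever the set is nonempty). On $\{u<1\}$ the elementary bound $-t\log t\le 1/e<1$ for $t\in(0,1)$, integrated over a set of measure at most one, contributes at most $1$. Collecting the two parts and absorbing constants yields the stated bound with the additive $2$. Throughout, the sole genuine obstacle is the pointwise logarithmic estimate; once it is available, the rest follows from the convexity of $t\mapsto t\log t$ and Young-type arithmetic.
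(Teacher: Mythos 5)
Your proof is correct and follows essentially the same route as the paper: both hinge on locating a point $x_0$ with $u(x_0)=M$ and applying Cauchy--Schwarz with the splitting $|\dx u|/u=(|\dx u|/u^{3/2})\cdot u^{1/2}$ to get $\io u\log u\le M\log M+M^{3/2}\bigl(\io |\dx u|^2/u^3\bigr)^{1/2}$, after which your Young inequality step is exactly the paper's completion of the square for \eqref{RegEst1}. The only cosmetic difference is in \eqref{RegEst2}, where you split the domain into $\{u\ge 1\}$ and $\{u<1\}$ and use $-t\log t\le 1/e$ (with the needed remark on the sign of the coefficient), while the paper uses the identity $|\log(1/u)|=2\max\{0,\log(1/u)\}-\log(1/u)$ and the bound $s\le e^{s}$; both yield the additive constant $2$.
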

\begin{proof}
Let $t \in (0,T)$ be fixed. 
Since $\io u(t,x)\,dx = M$, we can find some point 
$x_0 \in [0,1]$ such that $u(t, x_0) = M$. Then
\begin{align*}
- \io u(x) \log u(x) &= \io u(x) \log \dfrac{1}{u(x)}\,dx\\
&= \io u(x) \left(\log \dfrac{1}{u(x)} - \log \dfrac{1}{u(x_0)} \right)\,dx + \io u(x)\log \dfrac{1}{u(x_0)}\,dx\\
&= \io u(x) \left(\int_{x_0}^x \dx \left( \log \dfrac{1}{u(z)} \right) \,dz \right)\,dx
- M \log M.
\end{align*}
By the Cauchy--Schwarz inequality it follows that
\begin{align}\label{RegEst3}
\notag
- \io u(x) \log u(x) & \geq
-  \io u(x) \left(\int_{x_0}^x \frac{1}{u(z)} \cdot \dfrac{|\dx u(z)|^2}{u^2(z)} \,dz \right)^{\frac{1}{2}}
\left(\int_{x_0}^x u(z) \,dz \right)^{\frac{1}{2}}\,dx
- M \log M\\
& \geq - M^{\frac{3}{2}} \left( \io \dfrac{|\dx u|^2}{u^3} \right)^{\frac{1}{2}} 
- M \log M.
\end{align}
From the above inequality we derive the lower bound for $\mathcal{F}(u)$ such that
\begin{align*}
\mathcal{F}(u) &\geq
\frac{1}{4} \io \dfrac{|\dx u|^2}{u^3} + \frac{1}{4} \io \dfrac{|\dx u|^2}{u^3} 
- M^{\frac{3}{2}} \left( \io \dfrac{|\dx u|^2}{u^3} \right)^{\frac{1}{2}} 
+ M^3 - M^3 - M \log M\\
& = \frac{1}{4} \io \dfrac{|\dx u|^2}{u^3}
+ \dfrac{1}{4} \left(\left( \io \dfrac{|\dx u|^2}{u^3} \right)^{\frac{1}{2}} - 2M^{\frac{3}{2}} \right)^2 
- M^3 - M\log M,
\end{align*}
which implies \eqref{RegEst1}. 
Since 
\begin{align*}
\left|\log \frac{1}{u}\right| = 2 \max \left\{0, \log \frac{1}{u} \right\} - \log \frac{1}{u},
\end{align*}
we see that 
\begin{align*}
\io u \left|\log \frac{1}{u}\right| 
&=2 \io u \max \left\{0, \log \frac{1}{u} \right\} - \io u\log \frac{1}{u}.
\end{align*}
It follows from \eqref{RegEst3} that
\begin{align*}
\io u \left|\log \frac{1}{u}\right| 
& \leq
2 \io u \left(\exp\left( \max \left\{0, \log \frac{1}{u} \right\} \right) -1 \right)
+M \log M
+ M^{\frac{3}{2}} \left( \io \dfrac{|\dx u|^2}{u^3} \right)^{\frac{1}{2}}\\
&\leq 2 \io u \exp\left(\log \frac{1}{u} \right) +M \log M
+ M^{\frac{3}{2}} \left( \io \dfrac{|\dx u|^2}{u^3} \right)^{\frac{1}{2}},
\end{align*}
which concludes \eqref{RegEst2}.
\end{proof}
\begin{prop}
Let $(u,v)$ be a solution of \eqref{P1} with $a(u)=1/(1+u)$ 
in $(0,T)\times (0,1)$, again $\io u_0 = M>0$. 
The following estimates hold
\begin{align}\label{RegEst1.5}
\mathcal{F}(u) \geq 
\dfrac{1}{4} \io \dfrac{|\dx u|^2}{u(1+u)^2} - M \log (1+M) - M^3,
\end{align}
and
\begin{align}\label{RegEst2.5}
\io u\log (1+u)
\leq 
M^3 + M \log (1+M) + 1/4 \left( \io \dfrac{|\dx u|^2}{u(1+u)^2} \right)^{\frac{1}{2}}.
\end{align}
\end{prop}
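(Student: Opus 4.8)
The plan is to mirror the proof of Proposition~\ref{propo}, with $\log u$ replaced by $\log(1+u)$ and the Dirichlet weight $u^{-3}$ replaced by the weight $u^{-1}(1+u)^{-2}$ that appears in $\mathcal{F}$ when $a(u)=1/(1+u)$. Throughout I fix $t\in(0,T)$ and abbreviate $X:=\io \frac{|\dx u|^2}{u(1+u)^2}$. Since $\io u(t,x)\,dx=M$ and the spatial interval has length one, continuity of $u(t,\cdot)$ furnishes a point $x_0\in[0,1]$ with $u(t,x_0)=M$, hence $\log(1+u(t,x_0))=\log(1+M)$.

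The one quantitative ingredient is an upper bound for $\io u\log(1+u)$. Writing $\log(1+u(x))=\log(1+M)+\int_{x_0}^x \dx\log(1+u)\,dz$, multiplying by $u(x)$ and integrating in $x$ produces a constant contribution $M\log(1+M)$ plus a remainder. To the inner integral I apply the Cauchy--Schwarz inequality after the splitting
\begin{align*}
\dx\log(1+u)=\frac{\dx u}{1+u}=\frac{\dx u}{\sqrt{u}\,(1+u)}\cdot\sqrt{u},
\end{align*}
chosen so that the square of the first factor reproduces exactly the integrand of $X$. Bounding the resulting inner integrals by $X^{1/2}$ and $M^{1/2}$ and using $\io u\,dx=M$ for the outer integration yields
\begin{align*}
\io u\log(1+u)\le M\log(1+M)+M^{3/2}X^{1/2}.
\end{align*}
In contrast to the case $a(u)=1/u$, no decomposition into positive and negative logarithmic parts (and hence no truncation/exponential argument as in Proposition~\ref{propo}) is needed here, because $\log(1+u)\ge 0$ makes this quantity already nonnegative.

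The lower bound \eqref{RegEst1.5} then follows by inserting this estimate into $\mathcal{F}(u)=\frac12 X-\io u\log(1+u)$ and completing the square via $\frac12 X-M^{3/2}X^{1/2}=\frac14 X+\frac14\big(X^{1/2}-2M^{3/2}\big)^2-M^3\ge \frac14 X-M^3$, exactly as in the derivation of \eqref{RegEst1}. Finally, \eqref{RegEst2.5} is obtained simply by rearranging \eqref{RegEst1.5} against the identity $\mathcal{F}(u)=\frac12 X-\io u\log(1+u)$. The only genuinely delicate point is the $\sqrt{u}$ splitting in the Cauchy--Schwarz step: the power of $u$ must be distributed so that one factor matches the weight $u^{-1}(1+u)^{-2}$ of the Dirichlet term in $\mathcal{F}$ while the residual $\int u$ integrals close against the conserved mass $M$. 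Once this matching is arranged, the remaining manipulations are routine bookkeeping identical to Proposition~\ref{propo}.
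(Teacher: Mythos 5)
Your proof is correct and follows essentially the same route as the paper: the same mean-value point $x_0$ with $u(t,x_0)=M$, the same Cauchy--Schwarz splitting $\frac{\dx u}{1+u}=\frac{\dx u}{\sqrt{u}\,(1+u)}\cdot\sqrt{u}$ giving $\io u\log(1+u)\le M\log(1+M)+M^{3/2}\bigl(\io \frac{|\dx u|^2}{u(1+u)^2}\bigr)^{1/2}$, the same completion of the square for \eqref{RegEst1.5}, and the same rearrangement against $\mathcal{F}$ for the second bound. One remark: what you derive (and what the paper's own proof derives in its final display) is $\io u\log(1+u)\le \frac{1}{4}\io \frac{|\dx u|^2}{u(1+u)^2}+M\log(1+M)+M^3$, i.e.\ with the Dirichlet-type term appearing to the first power; the exponent $\frac{1}{2}$ in the displayed statement \eqref{RegEst2.5} is evidently a typo in the paper, and your version is the one consistent with both proofs and sufficient for the later estimate \eqref{raz}.
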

\begin{proof}
Exactly the same way as in \eqref{RegEst3} we obtain the estimate
\[
-\io u\log(1+u)\geq -M^{\frac{3}{2}}\left(\io \dfrac{|\dx u|^2}{u(1+u)^2}\right)^{\frac{1}{2}}-M\log (1+M),
\]
which gives as in turn (again proceeding along the lines of the proof of Proposition \ref{propo})
\eqref{RegEst1.5}. Consequently
\[
\int_0^1 u\log(1+u)dx \leq \left( 1/2-1/4 \right)\left(\io \dfrac{|\dx u|^2}{u(1+u)^2}\right)+M\log (1+M)+M^3.
\] 
\end{proof}
Here we obtain regularity estimates, which depend on the time interval $T>0$.
\begin{prop}\label{prop_regest}
Let $(u,v)$ be a solution of \eqref{P1} with $a(u)=1/u$ 
in $(0,1)\times (0,T)$. Then  there exists some constant $C>0$ such that
\begin{align*}
\io u |\log u| + \io \frac{|\dx u|^2}{u^3} \leq C(1+T)
\qquad\mbox{for all }t \in (0,T).
\end{align*}
\end{prop}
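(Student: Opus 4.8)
The plan is to combine the functional identity from Corollary \ref{cor_lyapunov} with the elliptic regularity of Lemma \ref{prop_ellipticReg} and the lower-bound estimates of Proposition \ref{propo}, so as to close a Gronwall-type argument on the time interval $(0,T)$. Since we are in the case $a(u)=1/u$, Corollary \ref{cor_lyapunov} gives $\frac{d}{dt}\mathcal{F}(u(t))+\mathcal{D}(u(t),v(t))=\io \frac{v^2}{4}$, because $b(u)\equiv 1$ here. The dissipation $\mathcal{D}$ is nonnegative, so we may simply drop it and obtain the differential inequality $\frac{d}{dt}\mathcal{F}(u(t))\leq \frac14\io v^2$. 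The right-hand side is controlled uniformly in time by Lemma \ref{prop_ellipticReg} with $p=2$, which yields $\io v^2\leq M^2$ for some mass-dependent constant. Integrating in time then produces $\mathcal{F}(u(t))\leq \mathcal{F}(u_0)+\frac{M^2}{4}\,t\leq C(1+T)$ for all $t\in(0,T)$.

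First I would record this upper bound $\mathcal{F}(u(t))\leq C(1+T)$, treating $\mathcal{F}(u_0)$ as a fixed constant determined by the initial datum. Next I would invoke the lower bound \eqref{RegEst1} of Proposition \ref{propo}, namely $\mathcal{F}(u)\geq \frac14\io \frac{|\dx u|^2}{u^3}-M\log M-M^3$. Combining the upper and lower bounds immediately gives $\frac14\io \frac{|\dx u|^2}{u^3}\leq C(1+T)+M\log M+M^3$, hence a bound of the form $\io \frac{|\dx u|^2}{u^3}\leq C(1+T)$ after relabeling the constant. This disposes of the gradient term in the claimed estimate.

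To handle the remaining term $\io u|\log u|$, I would feed the just-obtained gradient bound into estimate \eqref{RegEst2}, which reads $\io u|\log u|\leq 2+M\log M+M^{3/2}\big(\io \frac{|\dx u|^2}{u^3}\big)^{1/2}$. Since $\io \frac{|\dx u|^2}{u^3}\leq C(1+T)$, the square root is bounded by $C(1+T)^{1/2}\leq C(1+T)$, and so $\io u|\log u|\leq C(1+T)$ as well. Adding the two controlled quantities yields the asserted inequality $\io u|\log u|+\io \frac{|\dx u|^2}{u^3}\leq C(1+T)$ with a single constant $C$ depending only on $M$, $u_0$, and the structural constants.

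The conceptual core of the argument is thus already prepared by the earlier results, and the only genuine subtlety is ensuring that the constant $C$ depends solely on the mass $M=\io u_0$ and on $\mathcal{F}(u_0)$, but not on $T$ or on the solution itself; this is what makes the eventual globalization (via the extensibility criterion in Lemma \ref{prop_local_existence}) possible. The step I expect to require the most care is the uniform-in-time control of $\io v^2$ through Lemma \ref{prop_ellipticReg}: one must verify that the $L^2$ bound on $v$ truly depends only on $\|u_0\|_{L^1}$ and not on any higher norm of $u$, since otherwise the inequality would become circular. Once that dependence is confirmed, the rest is a direct assembly of \eqref{RegEst1} and \eqref{RegEst2} with the time-integrated functional bound, and no further estimation is needed.
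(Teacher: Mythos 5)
Your proposal is correct and follows essentially the same route as the paper's own proof: drop the nonnegative dissipation $\mathcal{D}$ in Corollary \ref{cor_lyapunov}, bound $\io v^2$ uniformly via Lemma \ref{prop_ellipticReg} (whose constant indeed depends only on $\|u_0\|_{L^1}$, so no circularity arises), integrate in time to get $\mathcal{F}(u(t))\leq \mathcal{F}(u_0)+CT$, and then combine with \eqref{RegEst1} and \eqref{RegEst2}. No gaps; this matches the paper's argument step for step.
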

\begin{proof}
Since Lemma \ref{prop_ellipticReg} yields that there exists some constant $C>0$ such that
\begin{align*}
\io \dfrac{v^2}{4} \leq C,
\end{align*}
it follows that for all $t \in (0,T)$,
\begin{align*}
\mathcal{F}(u(t)) \leq \mathcal{F}(u_0) + CT.
\end{align*}
Thus \eqref{RegEst1} implies that
\begin{align*}
\dfrac{1}{4} \io \dfrac{|\dx u|^2}{u^3}
\leq \mathcal{F}(u(t)) + M \log M + M^3
\leq \mathcal{F}(u_0) + M \log M + M^3+ CT.
\end{align*}
Moreover combining the above inequality and \eqref{RegEst2} yields the desired inequality.
\end{proof}
\begin{remark}
For $a(u)=1/(1+u)$ in a similar way as in the previous proposition, basing on \eqref{RegEst1.5} and \eqref{RegEst2.5}, we obtain
\begin{equation}\label{raz}
\io u\log(1+u) + \io \dfrac{|\dx u|^2}{u(1+u)^2}\leq C(1+T).
\end{equation}
\end{remark}
Next we proceed with the usual regularity argument (see \cite{NSY, BCM-R}).  
\begin{lemma}
Let $(u,v)$ be a solution of \eqref{P1} with respectively $a(u)=1/u$ in $(0,1)\times (0,T)$, $u_0$ satisfying \eqref{nieujemne} or $a(u)=1/(1+u)$ and $u_0$ satisfying \eqref{dodatnie}. Then  there exists some constant $C(T)>0$ such that 
\begin{align*}
\io u^3 \leq C(T)
\qquad\mbox{for all }t \in (0,T).
\end{align*}
\end{lemma}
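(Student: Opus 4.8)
The goal is to bootstrap from the $W^{1,2}$-type control provided by \eqref{RegEst2} and \eqref{raz} (namely $\io u|\log u|$ together with $\io |\dx u|^2/u^3$, respectively the analogue with $(1+u)$) to a bound on $\io u^3$. The plan is to use Lemma~\ref{BHN_inequality} with a well-chosen test function $w$ that makes $\io w^4$ comparable to $\io u^3$, while keeping $\|w\|_{H^1}$ and $\io|w\log w|$ controllable by the already-established estimates.

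First I would set $w := u^{3/4}$ in the case $a(u)=1/u$ (and $w:=(1+u)^{3/4}$ or a similar power in the bounded-diffusion case), so that $\io w^4 = \io u^3$ is exactly the quantity we want to bound. Computing the ingredients on the right-hand side of Lemma~\ref{BHN_inequality}: $\|w\|_{L^1}=\io u^{3/4}\leq(\io u)^{3/4}=M^{3/4}$ is controlled by mass alone; $\io|w\log w|=\tfrac34\io u^{3/4}|\log u|$ is controlled (up to constants and Young's inequality against $\io u|\log u|$) by the estimate of Proposition~\ref{prop_regest}; and the gradient part $\io|\dx w|^2=\tfrac{9}{16}\io u^{-1/2}|\dx u|^2$ must be matched against $\io|\dx u|^2/u^3$. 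The latter requires an interpolation: writing $u^{-1/2}|\dx u|^2 = u^{5/2}\cdot u^{-3}|\dx u|^2$, one bounds $\io u^{-1/2}|\dx u|^2$ by $\|u\|_{L^\infty}^{5/2}\io|\dx u|^2/u^3$, or more safely by Hölder so as to introduce a factor of a lower $L^q$-norm of $u$ times $\io|\dx u|^2/u^3$; one then absorbs the resulting higher norm of $u$ back into $\io u^3$ via the $\delta$ on the right-hand side of Lemma~\ref{BHN_inequality}.

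The key mechanism is the factor $\delta$ in Lemma~\ref{BHN_inequality}: after expanding, the right-hand side will contain a term proportional to $\delta\,\big(\io u^3\big)^{\theta}$ for some $\theta<1$ (because the $H^1$ norm of $w$ contributes a sublinear power of $\io u^3$ after the gradient estimate is fed in), multiplied by the controlled quantity $\io u|\log u|\leq C(1+T)$. Choosing $\delta$ small and applying Young's inequality lets me absorb every occurrence of $\io u^3$ that appears with exponent strictly less than one into the left-hand side, leaving a bound $\io u^3\leq C(T)$. I would carry this out by first writing the raw inequality from Lemma~\ref{BHN_inequality}, then systematically replacing each factor by its a~priori bound from Proposition~\ref{prop_regest} (or \eqref{raz}), and finally performing the absorption.

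The main obstacle is the gradient term: $\io|\dx w|^2$ naturally produces $\io u^{-1/2}|\dx u|^2$, which is not literally one of the controlled quantities $\io|\dx u|^2/u^3$. The delicate point is to interpolate this against $\io|\dx u|^2/u^3$ and a suitable power-norm of $u$ so that the power of $\io u^3$ generated stays below one, ensuring absorbability; getting the exponents to close is exactly where the criticality of the nonlinearity $a(u)=1/u$ (respectively $1/(1+u)$) is felt, and it is where I would spend the most care. A cleaner route, which I would try in parallel, is to choose the exponent in $w$ more economically (e.g. test directly against a power of $u$ in the evolution equation, integrate by parts to reveal $\io|\dx u|^2/u^3$ with the right weight, and then feed the outcome into Lemma~\ref{BHN_inequality}), so that the gradient term appears already in the controlled form and no interpolation loss occurs.
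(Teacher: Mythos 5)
Your main route has a genuine gap, and it sits exactly at the point you flagged. In the static application of Lemma \ref{BHN_inequality} with $w=u^{3/4}$, the gradient term $\io|\dx w|^2=\tfrac{9}{16}\io u^{-1/2}|\dx u|^2$ cannot be controlled by what Proposition \ref{prop_regest} provides. Writing $u^{-1/2}|\dx u|^2=u^{5/2}\cdot u^{-3}|\dx u|^2$ and viewing $d\mu=u^{-3}|\dx u|^2\,dx$ as a measure whose total mass is bounded by $C(1+T)$, the only H\"older splitting consistent with having merely $L^1$ control of $\mu$ is
\begin{align*}
\io u^{-1/2}|\dx u|^2 \leq \|u\|_{L^\infty(0,1)}^{5/2}\io \dfrac{|\dx u|^2}{u^3};
\end{align*}
any split putting $u^{-3}|\dx u|^2$ into $L^q$ with $q>1$ uses information that is not available. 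The $L^\infty$ version cannot be closed by absorption: since $\io u^3\leq M\|u\|_{L^\infty(0,1)}^2$, one has $\|u\|_{L^\infty(0,1)}^{5/2}\geq M^{-5/4}\left(\io u^3\right)^{5/4}$, so the term you would need to absorb is \emph{superlinear} in $\io u^3$, and no time-independent choice of $\delta$ helps. Your claim that the right-hand side only produces $\delta\left(\io u^3\right)^{\theta}$ with $\theta<1$ is therefore unattainable on this route: the zeroth-order piece $\io w^2=\io u^{3/2}$ is indeed sublinear, but the gradient piece is not bounded by any power of $\io u^3$ at all (take $u=1+\eps\sin(Nx)$ with $N$ large), and the only available estimate for it reintroduces a supercritical power of the quantity to be bounded.

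The paper's proof avoids this entirely by never attempting a static bound: it keeps the evolution equation in play, and the absorption happens into the \emph{parabolic dissipation}, not into $\io u^3$. Testing the first equation of \eqref{P1} with $u^2$ (case $a(u)=1/u$) and using $\dx^2 v=v-u$ with $v\geq 0$ gives
\begin{align*}
\dfrac{1}{3}\dfrac{d}{dt}\io u^3 + 2\io|\dx u|^2 \leq \dfrac{2}{3}\io u^4,
\end{align*}
where the key structural point is that $ua(u)\equiv 1$ at the critical nonlinearity, so the dissipation generated is the \emph{unweighted} $\io|\dx u|^2$ — far stronger than the weighted $\io u^{-3}|\dx u|^2$ of Proposition \ref{prop_regest}. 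Lemma \ref{BHN_inequality} is then applied with $w=u$ itself, $\io u^4\leq\delta\|u\|^2_{H^1(0,1)}\io u|\log u|+C_\delta M$; the entropy bound $\io u|\log u|\leq C(1+T)$ multiplies $\delta$, Gagliardo--Nirenberg gives $\|u\|^2_{H^1(0,1)}\leq C\left(\io|\dx u|^2+M^2\right)$, and choosing $\delta\sim 1/(C(1+T))$ lets the whole $\delta$-term be absorbed by $2\io|\dx u|^2$ on the left; integrating in time yields $\io u^3\leq C(T)$. Your closing ``cleaner route'' (testing the equation by a power of $u$) is in fact the paper's method, except that it reveals the unweighted dissipation $\io|\dx u|^2$, not $\io|\dx u|^2/u^3$ as you wrote. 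Incidentally, the static information is not useless — $\io u^{-3}|\dx u|^2$ controls the $C^{1/2}$-seminorm of $u^{-1/2}$ in 1D, and combining this with $\io u=M$ forces a bound on $\|u\|_{L^\infty(0,1)}$, since a point of very large $u$ would make $\io u$ grow logarithmically in $\|u\|_{L^\infty(0,1)}$ — but that is a pointwise argument of a completely different nature from the BHN interpolation you propose.
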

\begin{proof}
We focus first on the case $a(u)=1/u$. Multiplying the first equation of \eqref{P1} by $u^2$ we have that
\begin{align*}
\dfrac{1}{3}\dfrac{d}{dt} \io u^3 
&=
-2 \io |\dx u|^2 + 2 \io u^2 \dx u \dx v\\
&=
-2 \io |\dx u|^2 - \frac{2}{3}\io u^3 \dx^2 v.
\end{align*}
Using the second equation of \eqref{P1} we yield that
\begin{align*}
\dfrac{1}{3}\dfrac{d}{dt} \io u^3 
+2 \io |\dx u|^2
\leq \frac{2}{3}\io u^4.
\end{align*}
It follows form Lemma \ref{BHN_inequality} and Proposition \ref{prop_regest} that for any $\delta>0$ there exists $C_\delta>0$ such that
\begin{align*}
\dfrac{1}{3}\dfrac{d}{dt} \io u^3 
+2 \io |\dx u|^2
\leq 
\delta(1+T) \|u\|^2_{H^1(0,1)} + C_\delta \|u\|_{L^1(0,1)}.
\end{align*} 
Recalling the fact that the Gagliardo--Nirenberg inequality implies 
$$
\|u\|^2_{H^1(0,1)} \leq C \left(\io |\dx u|^2 + \|u\|^2_{L^1(0,1)} \right)
$$ 
with some $C>0$ and choosing $\delta = 1/C(1+T)$, we see that
\begin{align*}
\dfrac{d}{dt} \io u^3 
\leq C(T) \qquad \mbox{for all }t\in (0,T)
\end{align*} 
with some $C(T)>0$.
In the case of $a(u)=1/(1+u)$ we start by multiplying the equation by $(1+u)^2$ and exactly the same way we arrive at 
\begin{align*}
\dfrac{d}{dt} \io (1+u)^3 
\leq C(T) \qquad \mbox{for all }t\in (0,T).
\end{align*}
\end{proof}
\begin{proof}[Proof of Theorem \ref{main_theorem}]
By the iterative argument (see \cite{BCM-R, NSY}) we have for any $p \in (1,\infty)$
\begin{align*}
\|u(t)\|_{L^p(0,1)} \leq C(T) \qquad \mbox{for all }t\in(0,T)
\end{align*}
with some $C(T)>0$. Finally by the standard regularity estimates for quasilinear parabolic equation (\cite[Proposition 3]{BCM-R}) we can derive boundedness of $u$,
\begin{align*}
\|u(t)\|_{L^\infty(0,1)} \leq C(T)  \qquad \mbox{for all }t\in(0,T),
\end{align*}
which implies global existence of solutions to \eqref{P1}. 
\end{proof}
%
%
%
%

\bigskip
\bigskip
\textbf{Acknowledgments} \\
The second author wishes to thank Institute of Mathematics of the Polish Academy of Sciences, where the idea of this paper births, for financial support and the warm hospitality.
\end{document}